\newtheorem{theorem}{Theorem}[section]
\newtheorem{lemma}[theorem]{Lemma}
\newtheorem{prop}[theorem]{Proposition}
\newtheorem{cor}[theorem]{Corollary}
\theoremstyle{remark}
\newcommand{\R}{\mathbb R}
\newcommand{\Z}{\mathbb Z}
\newcommand{\Q}{\mathbb Q}
\newcommand{\A}{\mathbb A}
\newcommand{\bD}{{\bf D}}
\newcommand{\bZ}{{\bf Z}}
\newcommand{\bG}{{\bf G}}
\newcommand{\g}{\mathfrak g}
\newcommand{\gk}{\mathfrak k}
\newcommand{\ga}{\mathfrak a}
\newcommand{\gn}{\mathfrak n}
\newcommand{\cO}{\mathcal O}
\newcommand{\cL}{\mathcal L}
\newcommand{\cT}{\mathcal T}
\newcommand{\cM}{\mathcal M}
\newcommand{\cH}{\mathcal H}
\newcommand{\ad}{\text{ad}}
\newcommand{\Ad}{\text{Ad}}
\newcommand{\ord}{\text{ord}}
\newcommand{\be}{\begin{equation}}
\newcommand{\ee}{\end{equation}}
\newcommand{\bes}{\begin{equation*}}
\newcommand{\ees}{\end{equation*}}
\newcommand{\ba}{\begin{eqnarray}}
\newcommand{\ea}{\end{eqnarray}}
\newcommand{\bas}{\begin{eqnarray*}}
\newcommand{\eas}{\end{eqnarray*}}
\title{Restrictions of $SL_3$ Maass forms to maximal flat subspaces}
\author{Simon Marshall}
\address{Department of Mathematics\\
University of Wisconsin -- Madison\\
480 Lincoln Drive\\
Madison\\
WI 53706, USA}
\email{marshall@math.wisc.edu}
\thanks{Supported by NSF grant DMS-1201321.}
\begin{document}

\begin{abstract}
Let $\psi$ be a Hecke-Maass cusp form on a cubic central simple algebra over $\Q$.  We apply arithmetic amplification to improve the local bound for the $L^2$ norm of $\psi$ restricted to maximal flat subspaces.
\end{abstract}

\maketitle

\section{Introduction}
\label{sec1}

Let $S$ be the globally symmetric space $SL_3(\R) / SO(3)$.  Let $\Gamma \subset SL_3(\R)$ be an arithmetic congruence lattice arising from a central simple algebra over $\Q$, and let $X = \Gamma \backslash S$ (see Section \ref{sec2} for definitions).  $X$ is a Riemannian orbifold of finite volume, and a manifold if $\Gamma$ is torsion free.  Let $\psi$ be a Hecke-Maass cusp form on $X$, that is to say a cuspidal eigenfunction of the full ring of invariant differential operators and the Hecke operators.  We assume that $\| \psi \|_2 = 1$.  We also assume that the spectral parameter of $\psi$ is of the form $t \lambda$, where $t > 1$ and $\lambda \in B^*$, and $B^*$ is a fixed compact regular subset of $\ga^*$.

Let $\Omega \subseteq X$ be a compact set.  Let $E \subset \Omega$ be a ball of radius 1 inside a maximal flat subspace of $S$.  It was proven in Theorem 1.2 of \cite{Ma2} that $\| \psi|_E \|_2 \ll_{B^*, \Omega} t^{3/4}$, and moreover that this bound is sharp on the compact globally symmetric space $SU(3) / SO(3)$ which is dual to $S$.  Note that Theorem 3 of \cite{BGT} and the $L^\infty$ bound of \cite{Sa} also provide bounds of this form with exponents of 1 and $3/2$ respectively.  In this paper, we apply arithmetic amplification to improve this exponent further.

\begin{theorem}
\label{main}

For any $0 < \delta < 1/12000$, there is $C = C(B^*, \Omega, \delta) > 0$ such that $\| \psi|_E \|_2 \le C t^{3/4 - \delta}$.

\end{theorem}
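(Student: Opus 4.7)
The plan is to combine the pre-trace formula and oscillatory-integral machinery that produced the local bound $t^{3/4}$ in \cite{Ma2} with arithmetic amplification using the Hecke operators from the cubic central simple algebra. Fix a bi-$SO(3)$-invariant test function $k_0 \in C_c^\infty(SL_3(\R))$ whose spherical transform is nonnegative, bounded below by $1$ on a ball of unit radius about $t\lambda$ in $\ga^*$, and concentrated there. Forming the automorphic kernel $K_0(x,y) = \sum_{\gamma \in \Gamma} k_0(x^{-1}\gamma y)$, nonnegativity of its spectral expansion gives
\be
\| \psi|_E \|_2^2 \le \int_E K_0(x,x)\, dx.
\ee

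Next I insert an amplifier. Using the standard Iwaniec--Sarnak construction adapted to $SL_3$ via the Hecke relations among $T_p$ and $T_{p^2}$ (so as to handle the primes at which $\lambda_\psi(p)$ is anomalously small), I build a normalized operator $A$ supported on Hecke operators of index $\le L$ for a parameter $L = t^\eta$, with $A\psi = \Lambda\psi$ and $\Lambda \gg L^{1-\epsilon}$. The amplified pre-trace inequality then reads
\be
\Lambda^2 \| \psi|_E \|_2^2 \le \int_E K^A(x,x)\, dx,
\ee
where $K^A(x,y) = \sum_\gamma w_\gamma\, k_0(x^{-1}\gamma y)$, the weights $w_\gamma$ are bounded, and $\gamma$ runs over an extended Hecke correspondence, namely elements of a maximal order of reduced norm $\ll L^c$.

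The geometric task is then to bound $\int_E k_0(x^{-1}\gamma x)\, dx$ for each such $\gamma$. The oscillatory-integral analysis of \cite{Ma2} shows that this integral is controlled by how well $\gamma$ conjugates the flat containing $E$ to itself, equivalently by the distance from $\gamma$ to the Cartan subgroup $H \subset SL_3(\R)$ stabilizing that flat: decomposing $\gamma = h \exp(Y)$ with $h \in H$ and $Y$ transverse to $\text{Lie}(H)$, the integral decays in $|Y|$ at a rate dictated by the regular $SL_3$ spherical kernel restricted to a rank-$2$ flat. This lets one dyadically partition the $\gamma$-sum according to $d(\gamma, H)$, reducing the problem to counting lattice points in tubes around $H$.

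The main obstacle, and the heart of the argument, is the resulting arithmetic count: for each $n \ll L^c$ and $r > 0$, bound the number of $\gamma$ in the order of reduced norm $n$ lying within distance $r$ of the fixed Cartan subgroup $H$. Since $H$ has codimension $6$ in $SL_3$, a saving of a positive power of $L$ over the trivial bound on the full Hecke correspondence should be available. Making this quantitative requires a cubic analogue of the Iwaniec--Sarnak counting of approximately-split matrices in $SL_2(\Z)$: one exploits the fact that an element of the order lying exactly in $H$ has eigenvalues in a fixed cubic \'etale algebra, so approximately-diagonal elements are controlled by geometry of numbers in that algebra. Combined with the factor $L^{2-2\epsilon}$ from the amplifier, and after balancing against the dyadic losses and the precision of the oscillatory-integral bound, these savings yield the improved exponent $3/4 - \delta$; the numerically small value $\delta < 1/12000$ reflects how conservative each of the many bookkeeping steps must be.
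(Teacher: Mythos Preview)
Your proposal has a genuine gap at the very first step. The diagonal inequality $\|\psi|_E\|_2^2 \le \int_E K_0(x,x)\,dx$ already throws away too much: the identity term $\gamma = e$ contributes $k_0(e)\cdot |E| \sim t^3$, since $k_0(e)$ is the Plancherel mass of a unit spectral window at $t\lambda$ (three positive roots for $SL_3$). This only recovers the $L^\infty$ bound $\|\psi|_E\|_2 \ll t^{3/2}$, not the local bound $t^{3/4}$. There is no oscillation in $\int_E k_0(x^{-1}\cdot e \cdot x)\,dx = k_0(e)\,|E|$ to exploit, so your appeal to the oscillatory-integral analysis of \cite{Ma2} cannot help for this term. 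Amplification saves at most a factor $N^{1/2}$, but reaching even $t^{3/4}$ would require $N \sim t^{3/2}$, far beyond what any diophantine count on a cubic order can tolerate.

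The paper's remedy is to dualize: bound $\langle b\psi,\phi\rangle$ for each $\phi \in L^2(\ga)$ of norm one, and use the \emph{off-diagonal} kernel via the relative-trace-type integral
\[
I(t,\phi,g) = \iint_\ga b\phi(H_1)\,\overline{b\phi(H_2)}\, k_t(\exp(-H_1)\, g \exp(H_2))\,dH_1\,dH_2.
\]
Now the oscillation of $\phi$ interacts with that of $k_t$, and for $\phi$ with Fourier support in a ball of radius $\beta$ about $Wt\lambda$ the identity term drops to $\ll t^{3/2+\epsilon}\beta^{9/2}$ rather than $t^3$. The argument then splits on the Fourier support of $\phi$: for $\widehat\phi$ supported near $Wt\lambda$ one amplifies with $N = t^{1/600}$ and obtains $\langle b\psi,\phi\rangle \ll t^{3/4 - 1/1200+\epsilon}\beta^{9/4}$ (Propositions~\ref{L2onspec} and~\ref{Ibound}); for $\widehat\phi$ supported outside the $\beta$-neighbourhood of $Wt\lambda$ a purely local estimate gives $\langle b\psi,\phi\rangle \ll t^{3/4}\beta^{-1/4+\epsilon}$ (Proposition~\ref{L2offspec}). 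Optimizing $\beta$ between these two regimes yields the theorem. This frequency decomposition on the flat, together with the off-diagonal pairing, is the idea missing from your outline; without it the identity contribution cannot be brought below $t^3$.
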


We have chosen this particular restriction problem because it is one of only two cases in which we can restrict to a maximal flat subspace and observe a regime change, or kink point, in the local bound for the $L^p$ norm of the restricted eigenfunction.  The other case is a geodesic on a surface, treated in \cite{Ma1}; see Theorem 1.2 of \cite{Ma2} for the proof of this classification.  Both of these features simplify the problem, as the flatness prevents us from having to use the nonabelian Fourier transform on the subspace, and the presence of a regieme change lies behind the strong bound we are able to prove for the `off-diagonal' oscillatory integrals in Proposition \ref{Ibound}.  It would be interesting to see whether Theorem \ref{main} could be used to prove a power saving for the global $L^p$ norms of $\psi$ for small $p$ as in \cite{BS, Bo, So}.

{\bf Acknowledgements:}  We would like to thank Valentin Blomer, Zeev Rudnick, and Matthew Stover for helpful comments, and the referee for a careful reading of the manuscript.

\subsection{Outline of the proof}
\label{outline}

We shall prove Theorem \ref{main} in the same way as the main theorem of \cite{Ma1}.  We shall use standard notations defined in Section \ref{sec2}.  Choose a compact set $\Omega \subset SL_3(\R)$.  If we fix a real-valued function $b \in C^\infty_0(\ga)$, it suffices to estimate the norm of $b\psi = b(H) \psi(g \exp(H)) \in L^2(\ga)$, uniformly for $g \in \Omega$.  We shall do this by estimating $\langle b\psi, \phi \rangle$ for $\phi \in L^2(\ga)$ of norm one.

To prove such an estimate, we shall construct an $SO(3)$-biinvariant function $k_t \in C^\infty_0(SL_3(R))$ whose Harish-Chandra transform $h_t$ is non-negative on the spectral parameters of unitary representations, satisfies $h_t(t\lambda) \ge 1$, and is concentrated near $Wt\lambda$.  Form the kernel function

\bes
K(x,y) = \sum_{\gamma \in \Gamma} \overline{k_t}(x^{-1} \gamma y)
\ees
on $\Gamma \backslash SL_3(\R)$.  The integral operator on $L^2(X)$ with kernel $K(x,y)$ is then an approximate spectral projector onto eigenfunctions with spectral parameter $t\lambda$.  We write the spectral decomposition of $L^2(X)$ as

\bes
L^2(X) = \int V_\pi d\pi,
\ees
where each $V_\pi$ is a one dimensional space spanned by an eigenform $\psi_\pi$ (which may be an Eisenstein series or an iterated residue thereof).  We assume that $\psi \in \{ \psi_\pi\}$.  If we let the spectral paramater of $\psi_\pi$ be $\lambda_\pi$, the kernel $K(x,y)$ has the spectral expansion

\bes
K(x,y) = \int h_t(\lambda_\pi)\psi_\pi(x) \overline{\psi_\pi(y)} d\pi.
\ees
If we form the integral

\bes
\iint_\ga \overline{b\phi(H_1)} b\phi(H_2) K( g \exp(H_1), g \exp(H_2) ) dH_1 dH_2,
\ees
substituting the spectral and geometric expansions of $K(x,y)$ gives

\begin{multline}
\label{specint1}
\sum_{\gamma \in \Gamma} \iint_\ga \overline{b\phi(H_1)} b\phi(H_2) \overline{k_t}( \exp(-H_1) g^{-1} \gamma g \exp(H_2) ) dH_1 dH_2 \\
= \int h_t(\lambda_\pi) | \langle \psi_\pi, b \phi \rangle |^2 d\pi \ge | \langle \psi, b \phi \rangle |^2,
\end{multline}
which is analogous to a relative trace formula for the subgroup $A$.

We shall need to apply the identity (\ref{specint1}) in two different ways.  We define the Fourier transform on $L^2(\ga)$ by

\bes
\widehat{f}(\mu) = \int_\ga f(H) e^{-i \mu(H)} dH.
\ees
Let $\beta$ be a parameter satisfying $1 \le \beta \le t^{1/2}$.  For $\mu \in \ga^*$, define $H(\mu, \beta) \subset L^2(\ga)$ to be the space of functions whose Fourier support lies in the ball of radius $\beta$ around $\mu$ with respect to the norm $\| \cdot \|$ defined in Section \ref{sec22}.  Define

\bes
H_\beta = \bigoplus_{w \in W} H(wt\lambda, \beta).
\ees
If we choose $k_t$ to be sufficiently small that only the identity contributes to (\ref{specint1}), and let $\phi \in H_\beta^\perp$ have norm 1, analysing the resulting integral allows us to prove the bound

\be
\label{offspec1}
\langle b\psi, \phi \rangle \ll_\epsilon t^{3/4} \beta^{-1/4 + \epsilon}.
\ee
This shows that the main contribution to $\| b \psi \|_2$ comes from those frequencies near $W t\lambda$.  The exact statement we prove is Proposition \ref{L2offspec}.  The bound (\ref{offspec1}) is purely local, and does not use the arithmeticity of $\Gamma$ or $\psi$.  However, it does require asymptotics for the spherical functions on $SL_3$ that are strongly uniform in the group variable, which are taken from Theorems 1.3 and 1.4 of \cite{Ma2}.\\

We bound $\langle b\psi, \phi \rangle$ for $\phi \in H(wt\lambda, \beta)$ of norm 1 by applying a technique known as arithmetic amplification, which amounts to introducing a Hecke operator in the identity (\ref{specint1}).  This method was introduced as a way of bounding $L^\infty$ norms of Maass forms on $GL_2$ by Iwaniec and Sarnak in \cite{IS}.  It has since been used by many authors to bound $L^\infty$ norms of Maass forms on other groups (see for instance \cite{BM1,BM2,BP,HRR}), which may be thought of as bounding the $L^2$ norms of their restrictions to points.  This paper is the second time the method has been applied to restrictions to submanifolds of positive dimension, with the first being \cite{Ma1}.

We shall give an outline of the amplification method.  We let $\cT$ be a Hecke operator, and apply $\cT \cT^*$ to $K(x,y)$ in the first variable.  The identity corresponding to (\ref{specint1}) is now

\begin{multline}
\label{specint2}
\sum_{\gamma \in \cT \cT^* \Gamma} C(\gamma) \iint_\ga \overline{b\phi(H_1)} b\phi(H_2) \overline{k_t}( \exp(-H_1) g^{-1} \gamma g \exp(H_2) ) dH_1 dH_2 \\
= \int h_t(\lambda_\pi) | \langle \cT \psi_\pi, b \phi \rangle |^2 d\pi \ge | \langle \cT \psi, b \phi \rangle |^2,
\end{multline}
where $\cT \cT^* \Gamma$ is the set of isometries appearing in $\cT \cT^*$, and $C(\gamma)$ is the coefficient of $\gamma \in \cT \cT^* \Gamma$.  We choose $\cT$ so that its eigenvalue on $\psi$ is large, and it should have small eigenvalues on the remaining $\psi_\pi$ by the orthogonality of systems of Hecke eigenvalues.  The term `amplification' comes from this way in which $\cT$ picks out $\psi$ from the collection $\{ \psi_\pi \}$.  We prove in Section \ref{sec6} that the oscillatory integrals appearing in (\ref{specint2}) are small unless $g^{-1} \gamma g$ is very close to $A$.  In Section \ref{sec4}, we use a diophantine argument to show that there are few $\gamma$ such that this happens.  This argument is taken from an unpublished paper of Lior Silberman and Akshay Venkatesh, and we thank the authors for permission to reproduce it here.  Combining these gives the bound

\be
\label{onspec1}
\langle b\psi, \phi \rangle \ll t^{3/4 - \delta} \beta^{9/4},
\ee
where we may take any $0 < \delta < 1/1200$.  The exact statement we prove is Proposition \ref{L2onspec}.  Combining (\ref{offspec1}) and (\ref{onspec1}) with $\beta = t^{2\delta/5}$ gives Theorem \ref{main}.

\section{Notation}
\label{sec2}

Throughout the paper, the notation $A \ll B$ will mean that there is a positive constant $C$ such that $|A| \le CB$, and $A \sim B$ will mean that there are positive constants $C_1$ and $C_2$ such that $C_1 B \le A \le C_2 B$.

\subsection{Division algebras and adelic groups}

We let $\A$ be the adeles of $\Q$, and $\A_f$ the finite adeles.  Let $D$ be a cubic central simple algebra over $\Q$.  We denote the reduced norm on $D$ by $\text{nr}$, and denote the kernel of $\text{nr}$ by $D^1$.  We let $\bD^\times$ and $\bD^1$ be the algebraic groups over $\Q$ such that $\bD^\times(\Q) = D^\times$ and $\bD^1(\Q) = D^1$.  We denote the center of $\bD^\times$ by $\bZ$, and define $\bG = \bD / \bZ$.  We denote $D \otimes_\Q \Q_v$ by $D_v$.  We denote $\bD^\times(\Q_v)$ by $D_v^\times$, and likewise for the other groups we have introduced.  Let $S_f$ be a finite set of finite places containing all places at which $D$ is ramified.  We choose an isomorphism $\phi_v : D_v \simeq M_3(\Q_v)$ for every $v \notin S_f$.  We shall implicitly identify $D_v^\times$ with $GL_3(\Q_v)$ via $\phi_v$ for $v \notin S_f$.

Let $R \subset D$ be a maximal order.  We define $R_p = R \otimes_\Z \Z_p \subset D_p$ for every prime $p$.  $R_p$ is a maximal order in $D_p$ for all $p$, and for $p \notin S_f$ we choose $\phi_p$ so that $\phi_p(R_p) = M_3(\Z_p)$.  We let $K_f = \otimes_p K_p$ be a maximal compact subgroup of $\bD^\times(\A_f)$. We assume that $K_p \subseteq R_p^\times$ for all $p$, and that $K_p = R_p^\times$ for $p \notin S_f$.  We define $K_\infty = \phi_\infty^{-1}( SO(3))$, and let $K = K_\infty \otimes K_f$.

For each $p$, we define $dg_p$ to be the Haar measure on $D_p^\times$ that assigns mass 1 to $R_p^\times$.  We define $dz_p$ to be the Haar measure on $Z_p$ that assigns mass 1 to $\Z_p^\times$, and let $d\overline{g}_p$ be the quotient measure on $G_p$.  We choose Haar measures $dg_\infty$, $dz_\infty$, and $d\overline{g}_\infty$ on $D^\times_\infty$, $Z_\infty$, and $G_\infty$, and denote the product measures by $dg$, $dz$, and $d\overline{g}$.

Define $X = \bD^\times(\Q) \backslash \bD^\times(\A) / K \bZ(\A)$.  $X$ is compact iff $D \not\simeq M_3(\Q)$.  If $X$ is an orbifold, we resolve any difficulties in talking about differential operators on $X$ by passing to a cover that is a manifold.

\subsection{Lie groups and algebras}
\label{sec22}

We define $A$ to be the subgroup of $SL_3(\R)$ consisting of diagonal matrices with positive entries, and let $Z_A$ be the centraliser of $A$ in $M_3(\R)$.  We define $N$ to be the subgroup of strictly upper triangular matrices.  We denote the Lie algebras of $SL_3(\R)$, $N$, $A$, and $SO(3)$ by $\g$, $\gn$, $\ga$ and $\gk$ respectively.  We denote the roots of $\ga$ in $\g$ by $\Delta$, and the set of positive roots corresponding to $\gn$ by $\Delta^+$.  We denote the set of regular and singular points in $\ga$ and $\ga^*$ by $\ga_r$, $\ga_s$, etc.  We let $M$ and $M'$ be the centraliser and normaliser of $\ga$ in $SO(3)$, and define the Weyl group $W = M' / M$.

We equip $M_3(\R)$ with the standard Euclidean norm as a 9-dimensional vector space, which we denote by $\| \cdot \|$.  We obtain a positive definite norm on $\g$ from $\| \cdot \|$ under the natural restriction, as well as a norm on $\ga^*$ by duality.  We shall also denote these norms by $\| \cdot \|$, the particular one we are using will be clear from the context.  We let $B_\gn$, $B_\ga$, and $B_\gk$ be the unit balls in $\gn$, $\ga$, and $\gk$ with respect to $\| \cdot \|$.  We let $d( \cdot, \cdot)$ be the left invariant metric on $SL_3(\R)$ associated to $\| \cdot \|$.  The Killing form on $\g$ will always be denoted by $\langle \, , \rangle$, and we give $S$ the Riemannian structure determined by $\langle \, , \rangle$.

The identification $\phi_\infty$ gives an identification of $G_\infty$ with $SL_3(\R)$, and we implicitly transfer all the definitions we have made here to $G_\infty$.

\subsection{Hecke algebras}

If $p \notin S_f$, we let $\cH_p$ denote the space of functions in $C^\infty_0(GL_3(\Q_p))$ that are bi-invariant under $K_p$.  It is an algebra under convolution with respect to the measure $dg_p$.  If $\varphi \in \cH_p$, we denote its adjoint operator by $\varphi^*$, which satisfies $\varphi^*(g) = \overline{\varphi(g^{-1})}$.  If $(a,b,c) \in \Z^3$, we define $K_p(a,b,c) \subset GL_3(\Q_p)$ to be the double coset

\bes
K_p(a,b,c) = K_p \left( \begin{array}{ccc} p^a & & \\ & p^b & \\ & & p^c \end{array} \right) K_p,
\ees
and let $\Phi_p(a,b,c) \in \cH_p$ be the characteristic function of $K_p(a,b,c)$.

We let $\cH$ be the space of functions in $C^\infty_0( \bD^\times(\A_f))$ that are bi-invariant under $K_f$.  We shall frequently identify $\cH_p$ with a subagebra of $\cH$ in the natural way.  If $(a, b, c) \in (\Q^\times)^3$, define $K(a, b, c) \subset \bD^\times(\A_f)$ to be

\bes
K(a, b, c) = \bigotimes_{p \in S_f} K_p \otimes \bigotimes_{p \notin S_f} K_p( \ord_p(a), \ord_p(b), \ord_p(c) ),
\ees
and let $\Phi(a,b,c) \in \cH$ be the characteristic function of $K(a, b, c)$.  We will sometimes implicitly identify $\Phi(a,b,c)$ and $K(a,b,c)$ with their images in $\bG$ under central integration and projection.  The action of $\varphi \in \cH$ on a function $f$ on $\bD^\times(\Q) \backslash \bD^\times(\A)$ is given by the usual formula

\bes
\varphi f(x) = f * \varphi^{\vee}(x) = \int_{\bD^\times(\A_f)} f(xg) \varphi(g) dg,
\ees
where $\varphi^\vee(g) = \varphi(g^{-1})$.

\subsection{Spherical functions}

If $\mu \in \ga^*$, we define $\varphi_\mu$ to be the corresponding spherical function on $G_\infty$.  If $k \in C^\infty_0(G_\infty)$, we define its Harish-Chandra transform by

\bes
\widehat{k}(\mu) = \int_{G_\infty} k(g) \varphi_{-\mu}(g) d\overline{g}_\infty.
\ees
If $k \in C^\infty_0(G_\infty)$ is $K$-biinvariant, we denote its adjoint by $k^*$, which satisfies $k^*(g) = \overline{k(g^{-1})}$.

\subsection{Maass forms}

Let $\psi$ be a cuspidal Hecke-Maass form on $X$, that is to say an eigenfunction of the ring of invariant differential operators and of the Hecke algebras $\cH_p$ for $p \notin S_f$.  We assume that $\| \psi \|_2 = 1$.  We let the spectral parameter of $\psi$ be $t\lambda$, where $t > 0$ and $\lambda \in B^*$, and $B^*$ is a fixed compact subset of $\ga^*_r$.

\section{Constructing an amplifier}
\label{sec3}

Let $p \notin S_f$ be a prime.  In this section, we construct an element $T_p \in \cH_p$ that will form part of the amplifier $\cT$.  We begin with the following relation in $\cH_p$.

\begin{lemma}
\label{Heckerel}

We have

\bes
\Phi_p(1,0,0) * \Phi_p(1,1,0) = \Phi_p(2,1,0) + (p^2 + p + 1) \Phi_p(1,1,1).
\ees

\end{lemma}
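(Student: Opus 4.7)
The plan is to verify the identity one $K_p$-double coset at a time. Both sides are $K_p$-biinvariant and supported on elements of $GL_3(\Q_p)$ of determinant in $p^3\Z_p^\times$; the double cosets contained in this set are precisely $K_p(3,0,0)$, $K_p(2,1,0)$, and $K_p(1,1,1)$, and their characteristic functions have pairwise disjoint supports. It therefore suffices to evaluate both sides at the diagonal representatives
\[
g_1 = \mathrm{diag}(p^3,1,1), \quad g_2 = \mathrm{diag}(p^2,p,1), \quad g_3 = pI,
\]
and the right-hand side gives $0$, $1$, and $p^2+p+1$ respectively, straight from the definition.

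For the left-hand side the plan is to use the standard lattice interpretation of Hecke convolutions. Set $L_0 = \Z_p^3$ and normalize so that $\mathrm{vol}(K_p) = 1$. Decompose $K_p(1,0,0) = \bigsqcup_i h_i K_p$ into right $K_p$-cosets, which are in bijection with index-$p$ sublattices $M_i = h_i L_0 \subset L_0$. Writing $\Phi_p(1,0,0) = \sum_i \mathbf{1}_{h_i K_p}$ and using the left $K_p$-invariance of $\Phi_p(1,1,0)$, the convolution integral collapses to
\[
(\Phi_p(1,0,0) * \Phi_p(1,1,0))(g) = \#\bigl\{\, M : gL_0 \subset M \subset L_0,\ L_0/M \cong \Z/p,\ M/gL_0 \cong (\Z/p)^2 \,\bigr\}.
\]

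With this reformulation, each evaluation becomes a short subgroup count inside the finite abelian group $L_0/gL_0$. At $g_1$ we have $L_0/gL_0 \cong \Z/p^3$; the unique index-$p$ subgroup has cyclic quotient $\Z/p^2$, so the count is $0$. At $g_2$ we have $L_0/gL_0 \cong \Z/p^2 \oplus \Z/p$, whose $p+1$ subgroups of order $p^2$ consist of the socle (isomorphic to $(\Z/p)^2$) together with $p$ cyclic subgroups of order $p^2$; only the socle has the required shape, giving a count of $1$. At $g_3 = pI$ we have $L_0/gL_0 \cong \F_p^3$, every subgroup is elementary abelian, and the number of subgroups of order $p^2$ equals the number of hyperplanes in $\F_p^3$, namely $p^2+p+1$. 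These match the right-hand side in each case. The only delicate point is the passage from the convolution to the lattice count and the associated normalization, but this is routine once one writes $\Phi_p(1,0,0)$ as a sum of characteristic functions of right $K_p$-cosets.
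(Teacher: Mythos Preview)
Your proof is correct. The lattice-count interpretation of the convolution is exactly right: decomposing $K_p(1,0,0)=\bigsqcup_i h_iK_p$ and using the bijection $h_iK_p\mapsto M_i=h_iL_0$ with index-$p$ sublattices of $L_0$, the condition $h_i^{-1}g\in K_p(1,1,0)$ becomes $gL_0\subset M_i$ with $M_i/gL_0\cong(\Z/p)^2$, and your three subgroup counts in $L_0/gL_0$ are all accurate.

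This is a genuinely different route from the paper's argument. The paper writes $\Phi_p(1,0,0)*\Phi_p(1,1,0)=a\Phi_p(1,1,1)+b\Phi_p(2,1,0)+c\Phi_p(3,0,0)$ with undetermined coefficients, then kills $c$ by taking adjoints and observing a denominator obstruction, finds $a$ by translating by $p^{-1}I$ and evaluating at the identity (where the convolution becomes $\|\Phi_p(1,0,0)\|_2^2=p^2+p+1$), and finally extracts $b$ by integrating both sides and comparing total volumes. Your approach is the classical direct one via sublattice counting and is arguably more elementary and more transparent for this particular identity; it also generalizes mechanically to other $GL_n$ Hecke relations. The paper's method avoids any explicit enumeration by exploiting structural features of the Hecke algebra (adjoint, $L^2$-norm, total mass), which is slicker here and illustrates techniques that recur later in the paper (e.g.\ in the proof of Lemma~\ref{Tpexpand}, where the spherical transform plays a similar role).
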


\begin{proof}

As $\Phi_p(1,0,0) * \Phi_p(1,1,0)$ must be supported on those double cosets $K_p(a,b,c)$ with $a+b+c = 3$ and $a$, $b$, $c \ge 0$, we must have

\be
\label{Heckeexpand1}
\Phi_p(1,0,0) * \Phi_p(1,1,0) = a \Phi(1,1,1) + b \Phi_p(2,1,0) + c \Phi_p(3,0,0)
\ee
for some $a$, $b$, and $c \in \R$.  Taking adjoints gives

\bes
\Phi_p(-1,0,0) * \Phi_p(-1,-1,0) = a \Phi_p(-1,-1,-1) + b \Phi_p(-2,-1,0) + c \Phi_p(-3,0,0),
\ees
and it follows that $c = 0$ as there can be no matrices whose entries have denominator $p^3$ in the support of $\Phi_p(-1,0,0) * \Phi_p(-1,-1,0)$.

If we translate (\ref{Heckeexpand1}) by $p^{-1} I$ we obtain

\be
\label{Heckeexpand2}
\Phi_p(1,0,0) * \Phi_p(-1,0,0) = a \Phi(0,0,0) + b \Phi_p(1,0,-1),
\ee
and it may be easily shown that

\bes
dg_p(K_p(1,0,0)) = p^2 + p + 1, \quad dg_p(K_p(1, 0, -1)) = (p^2+p)(p^2+p+1).
\ees
Evaluating (\ref{Heckeexpand2}) at the identity gives

\bes
a = \Phi_p(1,0,0) * \Phi_p(-1,0,0)(e) = \| \Phi_p(1,0,0) \|_2^2 = p^2 + p + 1.
\ees
Integrating (\ref{Heckeexpand2}) over $GL_3(\Q_p)$ gives

\bes
dg_p(K_p(1,0,0))^2 = a + b \, dg_p(K_p(1, 0, -1)),
\ees
and it follows that $b = 1$.

\end{proof}

Lemma \ref{Heckerel} implies that if we define $a(\psi,p)$ and $b(\psi,p)$ by

\bes
\Phi_p(1,0,0) \psi = a(\psi,p) p \psi, \quad \Phi_p(2,1,0) \psi = b(\psi,p) p^2 \psi,
\ees
then we cannot have both $|a(\psi,p)| \le 1/2$ and $|b(\psi,p)| \le 1/2$.  We define

\be
\label{Tpdef}
T_p = \Big\{ \begin{array}{ll} \Phi_p(1,0,0)/a(\psi,p)p & \text{if} \quad |a(\psi,p)| > 1/2, \\
\Phi_p(2,1,0) / b(\psi,p) p^2 & \text{otherwise}. \end{array}
\ee
It follows that $T_p \psi = \psi$ for all $p \notin S_f$.  We shall need the following bound for the coefficients in the expansion of $T_p T_p^*$.

\begin{lemma}
\label{Tpexpand}

Write

\bes
T_p T_p^* = \sum_{a \ge b \ge c} \alpha(a,b,c) \Phi_p(a,b,c).
\ees
If $\alpha(a,b,c) \neq 0$ then we have

\be
\label{Heckecoeffbd}
\alpha(a,b,c) \ll p^{c-a}
\ee
where the implied constant is absolute.  Moreover, one of the three pairs of inequalities

\begin{align}
\label{abcbd}
-1 \le a, b, c \le 2& \quad and \quad  a + b + c = 2, \\
\notag
-2 \le a, b, c \le 2& \quad and \quad a + b + c = 0, \\
\notag
-2 \le a, b, c \le 1& \quad and \quad a + b + c = -2,
\end{align}
holds.

\end{lemma}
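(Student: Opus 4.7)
\emph{Plan.} I would split on the two branches of definition (\ref{Tpdef}) of $T_p$ and use Lemma \ref{Heckerel} (together with its central translates) as the sole Hecke tool.

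In the first branch ($|a(\psi,p)| > 1/2$), we have $T_p = \Phi_p(1,0,0)/a(\psi,p)p$, so $T_p^* = \Phi_p(0,0,-1)/\overline{a(\psi,p)} p$. Multiplying Lemma \ref{Heckerel} on its right factor by $p^{-1}I$ gives
\bes
\Phi_p(1,0,0) * \Phi_p(0,0,-1) = \Phi_p(1,0,-1) + (p^2+p+1) \Phi_p(0,0,0).
\ees
Dividing by $|a(\psi,p)|^2 p^2$ produces $\alpha(1,0,-1) \ll p^{-2}$ and $\alpha(0,0,0) \ll 1$, both of the shape $\ll p^{c-a}$, and the support $\{(1,0,-1),(0,0,0)\}$ satisfies the middle inequality pair ($-2 \le a,b,c \le 2$, $a+b+c=0$).

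In the second branch ($|b(\psi,p)| > 1/2$), $T_p = \Phi_p(2,1,0)/b(\psi,p)p^2$ and $T_p^* = \Phi_p(0,-1,-2)/\overline{b(\psi,p)} p^2$. A change of variables using central translation by $pI$ identifies $\Phi_p(2,1,0) * \Phi_p(0,-1,-2) = \Phi_p(1,0,-1)^2$. I would then decompose this square by substituting $\Phi_p(1,0,-1) = \Phi_p(1,0,0) * \Phi_p(0,0,-1) - (p^2+p+1) \Phi_p(0,0,0)$ from the first branch, expanding, and applying the standard Hecke square $\Phi_p(1,0,0)^2 = \Phi_p(2,0,0) + (p+1)\Phi_p(1,1,0)$ (derivable from Lemma \ref{Heckerel} by an argument of the same kind) together with its adjoint. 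Iteratively reducing each cross convolution via translates of Lemma \ref{Heckerel} gives an explicit expansion of $T_p T_p^*$ supported on triples with $a+b+c=0$ and entries in $[-2,2]$; the leading term $\Phi_p(2,0,-2)$ then carries coefficient $\sim p^{-4} = p^{c-a}$.

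The main obstacle is the bookkeeping of coefficients in the second branch. These can either be read off from the explicit expansion or, more cleanly, bounded via $\alpha(a,b,c) = \Vol(K_p(1,0,-1) \cap g K_p(1,0,-1)) \cdot |b(\psi,p)|^{-2} p^{-4}$ for any $g \in K_p(a,b,c)$, with the intersection volume estimated by a standard lattice count. The claimed bound $\alpha \ll p^{c-a}$ then matches the scaling $\Vol(K_p(a,b,c)) \sim p^{2(a-c)}$, reflecting that Hecke coefficients behave like the inverse volume of the target double coset.
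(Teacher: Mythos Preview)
Your case split is legitimate, and the first branch is entirely correct: the translated Lemma~\ref{Heckerel} gives the full expansion of $\Phi_p(1,0,0)*\Phi_p(0,0,-1)$, and dividing by $|a(\psi,p)|^2 p^2$ yields exactly the claimed bounds. Note that since in each branch $T_p$ is a single Hecke function, you always land in the middle case $a+b+c=0$; the other two sum conditions in the lemma only arise because the paper works with the combined operator $\Phi_p = p^{-1}\Phi_p(1,0,0)+p^{-2}\Phi_p(2,1,0)$ to treat both branches at once.

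The second branch, however, has a genuine gap. Your identification $\Phi_p(2,1,0)*\Phi_p(0,-1,-2)=\Phi_p(1,0,-1)^{*2}$ is correct, but neither of the two routes you propose for bounding the coefficients is completed. The explicit expansion via iterated Hecke relations is certainly feasible, but it requires several further multiplication formulas beyond Lemma~\ref{Heckerel} (for instance $\Phi_p(2,0,0)*\Phi_p(0,0,-2)$, $\Phi_p(2,0,0)*\Phi_p(0,-1,-1)$, etc.), and you have not carried these out. More seriously, the ``inverse target volume'' heuristic you invoke is too weak on the intermediate cosets. Writing $\gamma(a,b,c)$ for the coefficient of $\Phi_p(a,b,c)$ in $\Phi_p(1,0,-1)^{*2}$, positivity and integration give only
\[
\gamma(a,b,c)\,\Vol(K_p(a,b,c)) \le \Vol(K_p(1,0,-1))^2 \sim p^8,
\]
so $\gamma(a,b,c)\ll p^{8-2(a-c)}$. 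This is sharp at $(2,0,-2)$ but gives $\gamma(1,0,-1)\ll p^4$ and $\gamma(0,0,0)\ll p^8$, whereas you need $\gamma(1,0,-1)\ll p^2$ and $\gamma(0,0,0)\ll p^4$. The trivial intersection bound $\gamma(a,b,c)\le\Vol(K_p(1,0,-1))\sim p^4$ handles $(0,0,0)$ but still misses $(1,0,-1)$ and the other intermediate cosets $(2,-1,-1)$, $(1,1,-2)$ by a factor of $p$ or $p^2$. A genuine ``lattice count'' of the intersection $K_p(1,0,-1)\cap gK_p(1,0,-1)$ on each such coset is possible but is a real computation, not a one-liner.

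The paper sidesteps all of this with a single positivity trick: convolving $\Phi_p*\Phi_p^*$ against the spherical function $\varphi_0$ with trivial Satake parameter. Since every $\beta(a,b,c)\ge 0$ and $(\Phi_p*\Phi_p^*)*\varphi_0$ has bounded eigenvalue, one gets $\beta(a,b,c)\cdot(\Phi_p(a,b,c)*\varphi_0)(e)\ll 1$; the lower bound $(\Phi_p(a,b,c)*\varphi_0)(e)\ge p^{a-c}$ then gives the uniform estimate $\beta(a,b,c)\ll p^{c-a}$ with no case analysis. This is the missing ingredient in your second branch.
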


\begin{proof}

If we define

\bes
\Phi_p = p^{-1} \Phi_p(1,0,0) + p^{-2} \Phi_p(2,1,0)
\ees
and write

\be
\label{Heckeexpand3}
\Phi_p * \Phi_p^* = \sum_{a \ge b \ge c} \beta(a,b,c) \Phi_p(a, b, c),
\ee
then we have $\beta(a,b,c) \ge 0$ and $|\alpha(a,b,c)| \le 2\beta(a,b,c)$.  The bounds (\ref{abcbd}) on $a$, $b$, and $c$ may be proven on a case by case basis by taking determinants, and taking adjoints and considering denominators as in Lemma \ref{Heckerel}.

We shall bound $\beta(a,b,c)$ using the spherical transform.  Define $A_p : GL_3(\Q_p) \rightarrow \Z^3$ to be the $p$-adic Iwasawa $A$ co-ordinate with respeect to the standard collection of subgroups, and let

\begin{align*}
\rho : \Z^3 & \rightarrow \Z \\
(a,b,c) & \mapsto a-c
\end{align*}
be the half sum of the positive roots.  We define the function $\varphi_0$ by

\bes
\varphi_0(g) = \int_{K_p} p^{\rho(A_p(kg))} dk_p,
\ees
so that $\varphi_0$ is the spherical function with trivial Satake parameter.  It follows from Proposition 7.2 of \cite{Co} that

\bes
\Phi_p(1,0,0) * \varphi_0 = \Phi_p(1,1,0) * \varphi_0 = 3p \varphi_0.
\ees
Combining this with Lemma \ref{Heckerel} gives $\Phi_p(2,1,0) * \varphi_0 = (8p^2 - p - 1) \varphi_0$, and so

\bes
(\Phi_p * \Phi_p^*) * \varphi_0 = (11 - p^{-1} - p^{-2})^2 \varphi_0.
\ees
Taking only one term in the expansion (\ref{Heckeexpand3}) and evaluating at the identity gives

\bes
\beta(a,b,c) (\Phi_p(a, b, c) * \varphi_0)(e) \le (11 - p^{-1} - p^{-2})^2.
\ees
We have

\begin{align*}
(\Phi_p(a, b, c) * \varphi_0)(e) & = \int_{K_p(a,b,c)} \varphi_0(g^{-1}) dg_p \\
& = \int_{K_p(a,b,c)} p^{\rho(A(g^{-1}))} dg_p \\
& \ge p^{a-c},
\end{align*}
and (\ref{Heckecoeffbd}) now follows.

\end{proof}

\section{Amplification of periods along flats}
\label{sec4}

Fix a real-valued function $b \in C^\infty_0(\ga)$ with $\text{supp}(b) \subseteq B_\ga$.  Let $\Omega \subset G_\infty$ be a compact set, let $g_0 \in \Omega$, and let $b \psi$ denote the function $b(H) \psi( g_0 \exp(H)) \in L^2(\ga)$.  This means that we are restricting ourselves to flat subspaces contained in the identity component of $X$.  This entails no loss of generality, as we may treat the other connected components by first translating $\psi$ by an element of $\bG(\A_f)$.  (In fact, the Hasse-Schilling norm theorem (see Theorem 33.15 of \cite{Re}) and the fact that $\bD^1$ satisfies strong approximation imply that an element of $(R \otimes_\Z \A_f)^\times$ will suffice.)  If we let $w \in W$ and $\phi \in H(wt\lambda, \beta)$ satisfy $\| \phi \|_2 = 1$, where $H(wt\lambda, \beta)$ is as in Section \ref{outline}, we shall prove the following bound for $\langle b \psi, \phi \rangle$.

\begin{prop}
\label{L2onspec}

For any $\epsilon > 0$ there is $C = C(B^*, \Omega, \epsilon) > 0$ such that

\bes
\langle b \psi, \phi \rangle \le C t^{3/4 - 1/1200 + \epsilon} \beta^{9/4}.
\ees

\end{prop}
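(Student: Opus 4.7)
The strategy is arithmetic amplification via the Hecke operators built in Section \ref{sec3}. For a parameter $L = t^\eta$ to be chosen, let $P$ be the set of primes in $[L, 2L] \setminus S_f$ and form the amplifier
\bes
\cT = \sum_{p \in P} T_p,
\ees
so that $T_p \psi = \psi$ yields $\cT \psi = |P| \psi$ with $|P| \asymp L/\log L$. Applying the amplified pre-trace identity (\ref{specint2}) and retaining only the contribution of $\psi$ on the spectral side (justified by the non-negativity of $h_t$ on the unitary dual together with $h_t(t\lambda) \ge 1$) gives
\bes
|P|^2 |\langle \psi, b\phi \rangle|^2 \le \sum_{\gamma \in \cT\cT^* \Gamma} C(\gamma) \, I(\gamma),
\ees
where $I(\gamma)$ is the double oscillatory integral over $H_1, H_2$ of $\overline{k_t}(\exp(-H_1) g_0^{-1} \gamma g_0 \exp(H_2))$ against $\overline{b\phi(H_1)} b\phi(H_2)$, and Lemma \ref{Tpexpand} (with its cross-prime analogue for $p \neq q$) controls both the coefficients $C(\gamma)$ and the $p$-adic denominator type of the admissible $\gamma$.

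Next I would combine two ingredients to bound $I(\gamma)$. First, since $\phi \in H(wt\lambda, \beta)$ is Fourier-concentrated near $wt\lambda$, the oscillatory integral analysis to be carried out in Section \ref{sec6}, using the uniform spherical function asymptotics from \cite{Ma2}, will show that $I(\gamma)$ is negligible unless $g_0^{-1}\gamma g_0$ lies in a tubular neighbourhood of the centraliser $Z_A$ of $A$ whose transverse width is a power of $\beta/t$, and that in this regime $I(\gamma) \ll t^{3/2}\beta^{9/2}$ times a transverse localisation factor. Second, the diophantine argument of Section \ref{sec4}, adapted from Silberman--Venkatesh, will count the $\gamma \in \cT\cT^* \Gamma$ with $g_0^{-1}\gamma g_0$ close to $Z_A$: the denominator constraints of Lemma \ref{Tpexpand}, combined with the integrality $\gamma \in R[1/pq]$ for the pair of primes supporting $\gamma$ and the requirement that $g_0^{-1}\gamma g_0$ approach a two-dimensional subvariety of $\bG$, force a sparsity that saves a positive power of $L$ over the trivial volume bound for $\cT\cT^*\Gamma$.

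Assembling these pieces, I would separate the identity contribution $\gamma = e$, which produces $|P|$ times the local wave-packet integral already bounded by $O(t^{3/2} \beta^{9/2})$ using the results of \cite{Ma2}, from the off-diagonal $\gamma \neq e$ contribution, arriving at a schematic bound
\bes
|P|^2 |\langle \psi, b\phi\rangle|^2 \ll |P| \, t^{3/2} \beta^{9/2} \;+\; L^{A - \delta_0} \, t^{3/2} \beta^{9/2},
\ees
where $A$ records the total mass of $\cT\cT^*$ at each prime pair and $\delta_0 > 0$ is the diophantine saving. Dividing by $|P|^2 \asymp L^2$, taking square roots, and choosing $\eta$ so that $L^{-\delta_0/2}$ realises the claimed saving $t^{-1/1200}$ will yield Proposition \ref{L2onspec}. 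The main obstacle is the diophantine count: one must verify, for each double-coset type admitted by Lemma \ref{Tpexpand} (equations (\ref{abcbd})) and each transverse neighbourhood of $Z_A$, that the Silberman--Venkatesh argument saves enough over the Hecke volume to leave a net power-of-$L$ gain after the coefficient loss $p^{c-a}$ in (\ref{Heckecoeffbd}). The specific exponent $1/1200$ reflects the balance between this saving, the amplifier length, and the $\beta^{9/4}$ local loss from wave-packet concentration.
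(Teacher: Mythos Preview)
Your outline is essentially the paper's argument: amplify by $\cT = \sum_{p} T_p$, invoke Lemma \ref{amplemma}, bound $I(\gamma)$ via Proposition \ref{Ibound}, and control the remaining $\gamma$ with the Silberman--Venkatesh count. Two points of precision deserve sharpening. First, the diophantine input (Proposition \ref{returnprop}) is stronger than a ``power saving over the trivial volume bound'': it gives an essentially bounded count $L(g,n,\kappa) \ll_\epsilon n^\epsilon$, but only under the hard constraint $\kappa \le C n^{-24}$. Since Proposition \ref{Ibound} forces the tube radius $\kappa = t^{-1/2+\epsilon}\beta^{1/2}$ and the norms $n = abc$ occurring in $\cT\cT^*$ range up to $N^6$, this constraint is exactly what pins down $N = t^{1/600}$ and hence the exponent $1/1200 = 1/(2\cdot 600)$; your schematic with an unspecified $\delta_0$ hides the mechanism that produces the stated number. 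Second, the relevant dichotomy is not identity versus off-diagonal but cross terms $T_pT_q^*$ ($p\neq q$) versus diagonal $T_pT_p^*$: with the $O(n^\epsilon)$ count in hand, the cross terms sum to $O(t^{3/2+\epsilon}\beta^{9/2})$ because the coefficients $\ll (pq)^{-1}$ make $\sum_{p\neq q}(pq)^{-1}$ bounded, while the diagonal contributes $O(N t^{3/2+\epsilon}\beta^{9/2})$; dividing by $\cT(\psi)^2 \gg N^{2-\epsilon}$ yields the proposition. (Incidentally, $p^{c-a}$ in (\ref{Heckecoeffbd}) is $\le 1$ since $c\le a$, so it is a gain on the coefficients, not a loss.)
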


To prove Proposition \ref{L2onspec}, we may assume without loss of generality that $w = 1$ as the other cases are identical.  Fix a real-valued non-negative function $h \in C^\infty(\ga^*)$ of Paley-Wiener type that satisfies $h(0) = 1$.  Define $h_t^0$ by 

\bes
h_t^0(\mu) = \sum_{w \in W} h( w\mu - t\lambda),
\ees
and let $k^0_t$ be the $K$-biinvariant function on $G_\infty$ with Harish-Chandra transform $h_t^0$.  The Paley-Wiener theorem of Gangolli \cite{Ga} implies that $k^0_t$ is of compact support that is uniform in $t \lambda$, and may be chosen to be arbitrarily small.  We define $k_t = k_t^0 * (k_t^0)^*$, and $h_t = \widehat{k_t}$.  Assume that $\textrm{supp}(k_t( \exp(H))) \subseteq B_\ga$.

Let $1 \le N \le t$ be an integer to be chosen later, and define $\cT$ to be the Hecke operator

\bes
\cT = \sum_{1 \le p \le N} T_p
\ees
where $T_p$ is as in (\ref{Tpdef}).  If we define $\cT(\psi)$ to be the scalar by which $\cT$ acts on $\psi$, the equations $T_p \psi = \psi$ for $p \notin S_f$ imply that $\cT(\psi) \gg_{X,\epsilon} N^{1-\epsilon}$.  We shall estimate $\langle b \psi, \phi \rangle$ by estimating $\langle \cT \psi, b \phi \rangle$.  If $g \in G_\infty$, we define the integral

\bes
I(t, \phi, g) = \iint_\ga b\phi(H_1) \overline{b\phi(H_2)} k_t( \exp(-H_1) g \exp(H_2) ) dH_1 dH_2.
\ees
We suppose that $b$ and $h_t$ are chosen so that $I(t, \phi, g) = 0$ unless $d(g,e) \le 1$.  The main amplification inequality that we shall use is the following.

\begin{lemma}
\label{amplemma}

We have

\be
\label{phiprod}
| \langle \cT \psi, b \phi \rangle |^2 \le \sum_{\gamma \in \bG(\Q)} | (\cT \cT^*)(\gamma) I(t, \phi, g_0^{-1} \gamma g_0) |.
\ee

\end{lemma}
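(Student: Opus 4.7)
The plan is to derive (\ref{phiprod}) as an immediate consequence of the amplified pre-trace identity (\ref{specint2}) applied at $g=g_0$, combined with the triangle inequality. Essentially all of the analytic content is already contained in (\ref{specint2}); what remains is a short bookkeeping argument.

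First I would specialize (\ref{specint2}) to $g=g_0$ and match the inner double integral on the geometric side with the definition of $I(t,\phi,g_0^{-1}\gamma g_0)$ given just above the lemma. Interchanging the roles of $H_1$ and $H_2$ and conjugating shows that the integral appearing in (\ref{specint2}) equals $\overline{I(t,\phi,g_0^{-1}\gamma g_0)}$. With the identification $(\cT\cT^*)(\gamma)=C(\gamma)$, the identity (\ref{specint2}) therefore reads
\[
\sum_{\gamma\in\bG(\Q)}(\cT\cT^*)(\gamma)\,\overline{I(t,\phi,g_0^{-1}\gamma g_0)}\;=\;\int h_t(\lambda_\pi)\,|\langle\cT\psi_\pi,b\phi\rangle|^2\,d\pi.
\]
I would then recall that this right-hand side bounds $|\langle\cT\psi,b\phi\rangle|^2$ from above, as already asserted in (\ref{specint2}). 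The justification is pure positivity: $|\langle\cT\psi_\pi,b\phi\rangle|^2=|\cT(\psi_\pi)|^2|\langle\psi_\pi,b\phi\rangle|^2\geq0$ since $\cT$ acts on the Hecke eigenform $\psi_\pi$ by the scalar $\cT(\psi_\pi)$, and $h_t=(h_t^0)^2\geq0$ on the unitary dual, with $h_t(t\lambda)\geq h(0)^2=1$ coming from the identity term in the definition of $h_t^0$. Dropping all spectral terms except the one attached to $\psi$ then gives the stated lower bound.

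Finally, since the spectral side is a nonnegative real number equal to the geometric sum, applying the triangle inequality to the geometric expression yields
\[
|\langle\cT\psi,b\phi\rangle|^2\;\leq\;\left|\sum_{\gamma\in\bG(\Q)}(\cT\cT^*)(\gamma)\,\overline{I(t,\phi,g_0^{-1}\gamma g_0)}\right|\;\leq\;\sum_{\gamma\in\bG(\Q)}\bigl|(\cT\cT^*)(\gamma)\,I(t,\phi,g_0^{-1}\gamma g_0)\bigr|,
\]
which is (\ref{phiprod}). There is no real obstacle to speak of; the only minor point requiring care is the identification of the Hecke weights $(\cT\cT^*)(\gamma)$ with the coefficients $C(\gamma)$ appearing in (\ref{specint2}), which is unambiguous given the explicit description of the amplifier $\cT=\sum_{p\leq N}T_p$ as a linear combination of characteristic functions of double cosets in Section \ref{sec3}.
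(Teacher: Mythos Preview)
Your proposal is correct and follows essentially the same route as the paper's own proof: both establish the geometric--spectral identity for the amplified kernel, observe that the spectral side is nonnegative because $h_t(\lambda_\pi)=|h_t^0(\lambda_\pi)|^2\ge 0$ on the unitary dual with $h_t(t\lambda)\ge 1$, drop all spectral terms except $\psi$, and then apply the triangle inequality on the geometric side. The only difference is presentational: the paper rederives the spectral expansion from scratch (in particular verifying $\pi(\overline{k_t})\psi_\pi=h_t(\lambda_\pi)\psi_\pi$ via $k_t=k_t^*$), whereas you cite the outline identity (\ref{specint2}) directly. One small remark: your parenthetical that $\cT$ acts by a scalar on each $\psi_\pi$ is not needed for the positivity step, and you should write $h_t=|h_t^0|^2$ rather than $(h_t^0)^2$, since spectral parameters on the unitary dual need not be purely real.
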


\begin{proof}

Consider the function

\bes
K(x,y) = \sum_{\gamma \in \bG(\Q)} \overline{k_t} \cT \cT^*(x^{-1} \gamma y)
\ees
on $\bG(\A) \times \bG(\A)$.  We have

\bes
\iint_\ga \overline{b\phi(H_1)} b\phi(H_2) K( g_0 \exp(H_1), g_0 \exp(H_2) ) dH_1 dH_2 = \sum_{\gamma \in \bG(\Q)} (\cT \cT^*)(\gamma) \overline{I(t, \phi, g_0^{-1} \gamma g_0)}.
\ees
We write the spectral decomposition of $L^2(X)$ as

\bes
L^2(X) = \int V_\pi d\pi,
\ees
where each $V_\pi$ is a one dimensional space spanned by an eigenform $\psi_\pi$ (which may be an Eisenstein series or an iterated residue thereof).  We assume that $\psi \in \{ \psi_\pi\}$.  If we let the spectral paramater of $\psi_\pi$ be $\lambda_\pi$, the identity $k_t = k_t^*$ implies that

\begin{align*}
\pi(\overline{k_t}) \psi_\pi & = \psi_\pi \int_{G_\infty} \overline{k_t}(g) \varphi_{\lambda_\pi}(g) d\overline{g}_\infty \\
& = \psi_\pi \int_{G_\infty} k_t(g) \varphi_{\lambda_\pi}(g^{-1}) d\overline{g}_\infty \\
& = \psi_\pi \int_{G_\infty} k_t(g) \varphi_{-\lambda_\pi}(g) d\overline{g}_\infty \\
& = h_t(\lambda_\pi) \psi_\pi.
\end{align*}
The spectral expansion of $K(x,y)$ is therefore

\bes
K(x,y) = \int h_t(\lambda_\pi)(\cT \psi_\pi )(x) \overline{(\cT \psi_\pi)(y)} d\pi,
\ees
which implies that

\be
\label{specint}
\iint_\ga \overline{b\phi(H_1)} b\phi(H_2) K( g_0 \exp(H_1), g_0 \exp(H_2) ) dH_1 dH_2 = \int h_t(\lambda_\pi) | \langle \cT \psi_\pi, b \phi \rangle |^2 d\pi.
\ee
Unitarity implies that all spectral parameters in the integral satisfy $h_t(\lambda_\pi) = |h_t^0(\lambda_\pi)|^2 \ge 0$.   Dropping all terms in (\ref{specint}) but $\psi$ and using the inequality $h_t(t\lambda) \ge 1$ gives the lemma.

\end{proof}

We shall estimate (\ref{phiprod}) with the aid of two propositions.  The first says that the only significant contribution to (\ref{phiprod}) comes from those $\gamma \in \bG(\Q) \cap \text{supp}(\cT \cT^*)$ for which $g_0^{-1} \gamma g_0$ is near $MA$.

\begin{prop}
\label{Ibound}

We have

\be
\label{Ibound1}
| I(t, \phi, g) | \ll t^{3/2 + \epsilon} \beta^{9/2}
\ee
for all $g$ and any $\epsilon > 0$.  If $d(g, MA) \ge t^{-1/2 + \epsilon} \beta^{1/2}$ for some $\epsilon > 0$, we have

\be
\label{Ibound2}
| I(t, \phi, g) | \ll t^{-A}.
\ee
The implied constants depend on $B^*$, $\epsilon$, and $A$ where applicable.

\end{prop}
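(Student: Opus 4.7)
The plan is to substitute the Harish-Chandra inversion of $k_t$ into $I(t,\phi,g)$, replace the spherical function by its uniform oscillatory asymptotic from Theorems 1.3 and 1.4 of \cite{Ma2}, and carry out stationary/non-stationary phase analysis in the spectral variable $\mu$, exploiting the fact that $\widehat{b\phi}$ is essentially concentrated in a ball of radius $\beta$ around $t\lambda$. First I would write
\bes
k_t(g) = \int_{\ga^*} h_t(\mu)|c(\mu)|^{-2}\varphi_\mu(g)\,d\mu,
\ees
so that
\bes
I(t,\phi,g) = \int_{\ga^*} h_t(\mu)|c(\mu)|^{-2}\iint_\ga b\phi(H_1)\overline{b\phi(H_2)}\,\varphi_\mu\bigl(\exp(-H_1)g\exp(H_2)\bigr)\,dH_1\,dH_2\,d\mu.
\ees
Since $h_t$ is supported in a bounded neighborhood of $Wt\lambda$ and $B^*\subset\ga^*_r$, I may restrict $\mu$ to small neighborhoods of each $wt\lambda$ on which $\mu$ remains regular.

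Next I would invoke the uniform oscillatory expansion of $\varphi_\mu$ from \cite{Ma2}, schematically of the form
\bes
\varphi_\mu(\exp(-H_1)g\exp(H_2)) = \sum_{w\in W} a_w(g, H_1, H_2, \mu)\,e^{iw\mu(H_2-H_1)+i\Phi_w(g,\mu)} + O(t^{-N}),
\ees
where the amplitudes $a_w$ are of size $t^{-3/2}$ with controlled derivatives, and the phase $\Phi_w(g,\mu)$ reduces to $w\mu(H(g))$ for an appropriate Cartan-type projection $H(g)\in\ga$ that vanishes precisely when $g\in MA$. Substituted into $I(t,\phi,g)$, the $H_1, H_2$ integrals become, up to smooth modifications absorbed into $a_w$, squared Fourier transforms of $b\phi$ at $w\mu$. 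Because $\hat\phi$ is supported in the ball of radius $\beta$ around $t\lambda$ and $b\in C_0^\infty$, $\widehat{b\phi}$ has rapid decay outside a $\beta$-neighborhood of $t\lambda$, so together with the $h_t$ support and the regularity of $t\lambda$ only the diagonal $w=1$ contributions (after Weyl symmetrization) survive.

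I would then analyze the surviving integral via the change of variable $\mu = t\lambda+\xi$ with $|\xi|\ll 1$, which brings it into the form
\bes
\int h_t(t\lambda+\xi)|c(t\lambda+\xi)|^{-2}\,a_1(g,\xi)\,e^{i(t\lambda+\xi)(H(g))}\,F(g,\xi)\,d\xi,
\ees
with $F(g,\xi)$ collecting the $H_1,H_2$-Fourier data of $b\phi$. The gradient in $\xi$ of the exponential equals $H(g)$. When $d(g, MA)\ge t^{-1/2+\epsilon}\beta^{1/2}$, iterating integration by parts in $\xi$ gains a factor $\gg(t^{-1/2+\epsilon}\beta^{1/2})^{-1}$ per step, after absorbing derivatives of $F$ (which cost at most fixed powers of $\beta$), yielding (\ref{Ibound2}). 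The global bound (\ref{Ibound1}) follows by trivial estimation using $|c(t\lambda+\xi)|^{-2}\ll t^3$, $|a_1|\ll t^{-3/2}$, a bounded $\xi$-volume of support, and the $\beta$-losses in $\|F\|_\infty$ and in the $H_j$-Fourier transforms of $b\phi$.

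The main obstacle will be managing the uniformity in $g$ of the \cite{Ma2} asymptotics near the walls of $\ga^*$ where the expansions degenerate, and carefully tracking the interplay between the width-$\beta$ Fourier concentration of $\phi$, the scale-$t^{-1/2}$ stationary-phase width of $\varphi_\mu$, and the derivative losses of the amplitudes $a_w$ in all four arguments. Once the asymptotics are set up with uniform error control, both the global polynomial bound and the non-stationary-phase decay follow from standard oscillatory integral analysis.
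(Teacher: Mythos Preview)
Your plan has a genuine gap in the mechanism that is supposed to produce the rapid decay (\ref{Ibound2}).

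First, the oscillatory expansion you invoke for $\varphi_\mu(\exp(-H_1)g\exp(H_2))$ does not exist in the separated form you state. Theorems 1.3 and 1.4 of \cite{Ma2} give asymptotics for $\varphi_{t\nu}(\exp H)$ with $H\in\ga$; transported to a general group element via the Cartan decomposition they yield phases $\mu\bigl(wH'(H_1,H_2,g)\bigr)$ where $H'$ is the radial part of $\exp(-H_1)g\exp(H_2)$. This $H'$ is a genuinely nonlinear function of $(H_1,H_2,g)$ and does not split as $H_2-H_1$ plus a term depending only on $g$. Nor is there any $H(g)\in\ga$ that vanishes exactly on $MA$: if $g=ma\in MA$ then $H'=H_2-H_1+\log a$, which depends on $a$.

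Second, and more fundamentally, integration by parts in $\xi$ cannot produce decay. After $\mu=t\lambda+\xi$, the localisation of $h_t$ confines $\xi$ to a set of diameter $O(1)$ (and the further localisation from $\widehat{b\phi}$ to a ball of radius $O(\beta)$). The $\xi$-gradient of your phase is whatever plays the role of $H(g)$, and under the hypothesis $d(g,MA)\ge t^{-1/2+\epsilon}\beta^{1/2}$ this quantity has size $\gtrsim t^{-1/2+\epsilon}\beta^{1/2}$, hence is \emph{small}. Each integration by parts therefore multiplies the bound by $|H(g)|^{-1}\le t^{1/2-\epsilon}\beta^{-1/2}\ge 1$, which is a loss, not a gain. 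There is simply no large parameter in the $\xi$-oscillation: the spectral window has width $O(1)$ and the spatial quantity it is paired against is $O(1)$ or smaller.

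The paper obtains the decay by a completely different route. It unfolds the spherical function via the Harish-Chandra integral $\varphi_{t\nu}(g)=\int_K e^{(\rho+it\nu)(A(kg))}\,dk$ and carries out non-stationary phase in the $H_1$, $H_2$, and $k$ variables, where the oscillation frequency is $t$. The derivative identity $\partial_s\langle H_\nu,A(g\exp(sH))\rangle|_{s=0}=\langle H_\nu,\Ad(k(g))H\rangle$ together with Kostant's convexity theorem shows that the $H_1$-phase (respectively $H_2$-phase) is stationary only when $k$ (respectively $k\,k_1(-H_1)$) lies in $M\exp(t^{-1/2+\epsilon}\beta^{1/2}B_\gk)$; when $d(g,MA)\ge t^{-1/2+\epsilon}\beta^{1/2}$ these two conditions cannot hold simultaneously, and the residual case where the $K$-part of $g$ is already close to $M$ is handled by a transverse non-stationary phase in $k$ using Lemma \ref{XKpsi}. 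The point is that the large parameter $t$ lives in the physical-space oscillation, not in the spectral one, and it is there that the integration by parts must be performed.
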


The second proposition bounds the number of isometries that map $E$ close to itself.  If $x \in R$ is nonzero, denote its image in $\bG(\Q)$ by $\overline{x}$.  If $g \in G_\infty$, $a, b, c \in \Q^\times$, $n \in \Z_{>0}$, and $\kappa > 0$, we define

\begin{align*}
\cM(g, a, b, c, \kappa) & = \{ \gamma \in \bG(\Q) \cap K(a,b,c) | d(g^{-1} \gamma g,e) \le 1, d(g^{-1} \gamma g, MA) \le \kappa \}, \\
M(g, a, b, c, \kappa) & = |\cM(g, a, b, c, \kappa)|,
\end{align*}
and

\begin{align*}
\cL(g, n, \kappa) & = \{ x \in R | |\text{nr}(x)| = n, d(g^{-1} \overline{x} g,e) \le 1, d(g^{-1} \overline{x} g, MA) \le \kappa \}, \\
L(g, n, \kappa) & = |\cL(g, n, \kappa)|.
\end{align*}
The following three results allow us to bound $M(g, a, b, c, \kappa)$.

\begin{lemma}
\label{returnlem}

If $a, b, c \in \Z_{>0}$ are relatively prime to $S_f$ and satisfy $(a,b,c) = 1$, we have

\bes
M(g, a, b, c, \kappa) \le L(g, abc, \kappa).
\ees

\end{lemma}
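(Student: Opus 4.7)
The goal is to build an injection $\Psi : \cM(g, a, b, c, \kappa) \hookrightarrow \cL(g, abc, \kappa)$ that sends each $\gamma \in \cM$ to a canonical integral lift $x \in R$ with $\overline{x} = \gamma$ and $|\text{nr}(x)| = abc$. Since $\bZ \simeq \mathbb{G}_m$ and $H^1(\Q, \mathbb{G}_m) = 0$ by Hilbert 90, we have $\bG(\Q) = D^\times / \Q^\times$, so $\gamma$ admits a lift $\tilde\gamma \in D^\times$. The hypothesis $\gamma \in K(a,b,c)$ then provides, at each finite prime $p$, a scalar $z_p \in Z_p = \Q_p^\times$ with $z_p \tilde\gamma \in K_p(\ord_p a, \ord_p b, \ord_p c)$ when $p \notin S_f$ and $z_p\tilde\gamma \in K_p \subset R_p^\times$ when $p \in S_f$. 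The integers $e_p := v_p(z_p)$ vanish for almost all $p$, so $r := \prod_p p^{e_p} \in \Q_{>0}^\times$ is a well-defined rational scalar, and we put $\Psi(\gamma) := x := r\tilde\gamma$.

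To verify $x \in \cL(g, abc, \kappa)$, note that at each finite $p$ the ratio $r/z_p$ lies in $\Z_p^\times$ while $z_p\tilde\gamma \in R_p$ (using $K_p(\ord_p a, \ord_p b, \ord_p c) \subset M_3(\Z_p) = R_p$ for $p \notin S_f$ and $K_p \subset R_p^\times$ for $p \in S_f$), so $x \in R_p$ for every $p$; by the local-global description of a maximal order this gives $x \in R$. Computing the reduced norm prime by prime, $v_p(\text{nr}(x)) = \ord_p(abc)$ for $p \notin S_f$ and $v_p(\text{nr}(x)) = 0$ for $p \in S_f$, so the coprimality hypothesis $(abc, S_f) = 1$ forces $|\text{nr}(x)| = abc$. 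The archimedean distance conditions defining $\cL$ are inherited from those defining $\cM$ via $\overline{x} = \gamma$. The construction is independent of the choice of lift: replacing $\tilde\gamma$ by $s\tilde\gamma$ for $s \in \Q^\times$ sends $z_p \mapsto z_p/s$, hence changes $x$ only by an overall sign, which is fixed by the convention $r > 0$. Injectivity of $\Psi$ is then immediate from $\gamma = \overline{\Psi(\gamma)}$, completing the bound $M(g,a,b,c,\kappa) \le L(g, abc, \kappa)$.

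The argument is essentially an exercise in local-global bookkeeping. The step requiring the most care is the handling of the ramified primes $p \in S_f$: one uses surjectivity of $\text{nr} : D_p^\times \to \Q_p^\times$ on the local division algebra to guarantee the existence of $z_p \in \Q_p^\times$ with integer valuation, after which the hypothesis $(abc, S_f) = 1$ ensures the resulting global reduced norm is exactly $abc$ rather than being corrupted by factors supported on $S_f$. No genuine geometric obstacle appears; everything else is a matter of tracking the valuation data consistently across all places.
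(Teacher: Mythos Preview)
Your proof is correct and follows the same strategy as the paper: construct an injection $\cM(g,a,b,c,\kappa) \hookrightarrow \cL(g,abc,\kappa)$ by lifting each $\gamma$ to an element $x \in R$ with $\overline{x}=\gamma$ and $|\text{nr}(x)| = abc$. The paper phrases the lift as ``take a primitive $x \in R$ projecting to $\gamma$'' and then uses primitivity together with $(a,b,c)=1$ to pin down the local double coset of $x_p$, whereas you build the same $x$ (up to sign) by rescaling an arbitrary lift $\tilde\gamma \in D^\times$ by a global rational scalar assembled from the local valuations $e_p$. Two minor remarks: your construction does not actually invoke the hypothesis $(a,b,c)=1$ (it is only needed in the paper's formulation to make the primitive lift land in the undisplaced coset), and your closing comment about surjectivity of $\text{nr}$ on $D_p^\times$ for $p\in S_f$ is not needed---the existence of $z_p$ with $z_p\tilde\gamma \in K_p$ is immediate from the definition of the condition $\gamma \in K(a,b,c)$ read in $\bG(\A_f)$.
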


\begin{proof}

Suppose $\gamma \in \cM(g, a, b, c, \kappa)$.  If $x \in R$ is a primitive element that projects to $\gamma$, then $x_p$ must be primitive in $R_p$ for all $p$.  We have

\begin{align*}
x_p & \in K_p Z_p \subseteq R_p^\times Z_p, \quad p \in S_f, \\
x_p & \in K_p( \ord_p(a), \ord_p(b), \ord_p(c)) Z_p, \quad p \notin S_f.
\end{align*}
The primitivity of $x_p$ and the condition $(a,b,c) = 1$ imply that

\begin{align*}
x_p & \in R_p, \quad p \in S_f, \\
x_p & \in K_p( \ord_p(a), \ord_p(b), \ord_p(c)), \quad p \notin S_f,
\end{align*}
so that $|\text{nr}(x)| = abc$.  It follows that $x \in \cL(g,abc,\kappa)$.  Because distinct elements of $\cM(g, a, b, c, \kappa)$ have distinct elements of $\cL(g,abc,\kappa)$ assigned to them, the result follows.

\end{proof}

\begin{prop}
\label{returnprop}

There is $C = C(\Omega) > 0$ such that if $g \in \Omega$ and $\kappa \le C n^{-24}$, then $L(g, n, \kappa ) \ll_{\Omega, \epsilon} n^\epsilon$.

\end{prop}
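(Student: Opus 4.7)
The overall strategy, due to Silberman and Venkatesh, exploits the rigidity implied by the very small tolerance $\kappa \le Cn^{-24}$: any two elements of $\cL(g,n,\kappa)$ must commute, and consequently all of $\cL$ lies in a common commutative \'etale $\Q$-subalgebra $F \subset D$ of dimension at most $3$, where the count reduces to a classical counting problem in an order of a cubic \'etale algebra. Throughout, the compactness of $\Omega$ absorbs bounded conjugations by $g \in \Omega$ into implied constants. First I convert the given projective bounds into Euclidean bounds on $x \in M_3(\R)$: from $|\text{nr}(x)| = n$ and $d(g^{-1}\overline{x}g, e) \le 1$, lifting $\overline{x}$ to $SL_3(\R)$ via the scaling $n^{-1/3} x$ gives $\|x\| \ll_\Omega n^{1/3}$, and then $d(g^{-1}\overline{x}g, MA) \le \kappa$ places $x$ within Euclidean distance $\ll \kappa n^{1/3}$ of the $3$-dimensional real subspace $V = g Z_A g^{-1} \subset M_3(\R)$.

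Next, for $x, y \in \cL(g,n,\kappa)$, I decompose $g^{-1}xg = D_x + E_x$ with $D_x \in Z_A$ of norm $\ll n^{1/3}$ and $E_x$ of norm $\ll \kappa n^{1/3}$, and similarly for $y$. Since $[D_x, D_y] = 0$, direct expansion gives $\|[g^{-1}xg,\, g^{-1}yg]\| \ll \kappa n^{2/3}$, and hence $\|[x,y]\| \ll_\Omega \kappa n^{2/3}$. On the other hand $[x,y] \in R$, which is a $\Z$-lattice in $M_3(\R) \cong \R^9$ whose shortest nonzero vector has length bounded below by a positive constant depending only on $R$. Thus a suitably small choice of $C = C(\Omega)$ forces $\|[x,y]\|$ below this minimum, giving $[x,y] = 0$. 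The exponent $24$ is much larger than the $2/3$ strictly needed at this point; the remaining slack is absorbed in the regularity argument below.

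If every element of $\cL$ is a scalar there is nothing to prove (a scalar $x \in R$ with $|\text{nr}(x)| = n$ forces $n$ to be a perfect cube, yielding $O(1)$ elements). Otherwise I pick a non-scalar $x \in \cL$; by the previous paragraph every $y \in \cL$ lies in the centralizer $Z_D(x) \subset D$. If $x$ is regular (its reduced characteristic polynomial has distinct roots), then $Z_D(x) = \Q(x) =: F$ is a $3$-dimensional cubic \'etale $\Q$-subalgebra of $D$, and $y \in F$ for all $y \in \cL$. Regularity follows essentially from integrality: the coefficients of the reduced characteristic polynomial of $x$ are integers of size $\ll n^{2/3}$, so a nonzero discriminant is a positive integer, forcing an eigenvalue gap $\gg n^{-2/3}$. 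The non-regular case would require the eigenvalues of $g^{-1}xg/n^{1/3}$ to cluster at a singular wall of $MA$ on a scale much finer than $\kappa$, which the bound $\kappa \le Cn^{-24}$ precludes; and the exceptional non-regular $x$ (whose characteristic polynomial factors as $(T-\alpha)^2(T-\beta)$ with $\alpha,\beta \in \Z$) are parameterized by factorizations $\alpha^2\beta = \pm n$ and so contribute $\ll_\epsilon n^\epsilon$ in total, which can be absorbed.

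It remains to count $y$ in the order $\cO_F := F \cap R$ of the cubic \'etale algebra $F$, with $|\text{nr}_{F/\Q}(y)| = n$ and $\|y\| \ll n^{1/3}$. The number of integral ideals of $\cO_F$ of norm $n$ is $\ll_\epsilon n^\epsilon$ by the divisor bound, and Dirichlet's unit theorem (the unit group of $\cO_F$ has rank at most $2$) together with the size constraint $\|y\| \ll n^{1/3}$ yields $O(1)$ generators per principal ideal, giving $L(g,n,\kappa) \ll_\epsilon n^\epsilon$. The main obstacle I anticipate is making the regularity / near-singular analysis in the third step fully quantitative and uniform in $n$ and in the position of $V$ relative to the lattice $R$; this is presumably where the generous exponent $24$ is actually consumed.
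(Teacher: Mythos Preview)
Your approach is \emph{genuinely different} from the paper's, and in one respect more efficient, but your reduction to a three-dimensional subalgebra has a real gap in the split case $D = M_3(\Q)$.

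\textbf{How the paper argues.} The paper does not use commutators. It invokes a lemma of Silberman--Venkatesh (Lemma~\ref{Akshay} in the text): one builds an integer-valued polynomial $P:D^4 \to \Q$ of degree $8$ (the sum of squares of the $4\times 4$ minors in an integral basis of $R$) that vanishes exactly when its four arguments are $\Q$-linearly dependent. The $\Q$-algebra generated by $\cL$ is spanned by monomials of length $\le 9 = \dim_\Q D$ in elements of $\cL$; each such monomial $y$ satisfies $\|y\| \ll X^9$ and lies within $X^8\epsilon$ of the $3$-dimensional space $S = gZ_Ag^{-1}$, where $X \asymp n^{1/3}$ and $\epsilon \asymp \kappa n^{1/3}$. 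Hence $|P(y_1,\dots,y_4)| \ll X^{71}\epsilon$, and if this is $<1$ integrality forces $P=0$, i.e.\ any four monomials are dependent and the generated algebra has dimension $\le 3$. The exponent $24$ is exactly $(71+1)/3$.

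\textbf{What your route buys and what it misses.} Your commutator step is correct and much sharper: from $\|[x,y]\| \ll_\Omega \kappa n^{2/3}$ and $[x,y]\in R$, the condition $\kappa \le Cn^{-2/3}$ already forces $[x,y]=0$, so the exponent $24$ is enormous overkill here. The trouble is the next step. You pass from pairwise commutativity to a cubic \'etale $F$ by choosing a non-scalar $x\in\cL$ and using $Z_D(x)=\Q(x)$; this is valid only when $x$ is regular. In the division-algebra case regularity is automatic, but for $D=M_3(\Q)$ an $x$ with characteristic polynomial $(T-\alpha)^2(T-\beta)$ has $\dim_\Q Z_D(x)=5$, and such $x$ can perfectly well lie in $\cL$: closeness to $MA$ in no way prevents the nearby diagonal matrix from sitting on a singular wall, so your ``scale finer than $\kappa$'' remark does not exclude this. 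Your fallback count of non-regular elements bounds only the number of characteristic polynomials ($\ll_\epsilon n^\epsilon$ factorizations $\alpha^2\beta=\pm n$), not the number of lattice points with each such polynomial.

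The clean fix is to bypass regularity entirely: once all elements of $\cL$ pairwise commute, the $\Q$-algebra they generate is commutative, and by Schur's theorem a commutative subalgebra of $M_3(\Q)$ has dimension at most $\lfloor 9/4\rfloor + 1 = 3$. This lands you in the same place as the paper's Lemma~\ref{Akshay}, after which the counting in $F\cap R$ proceeds identically. With this patch your argument is both correct and quantitatively stronger than the paper's.
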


\begin{proof}

Suppose that $x \in \cL(g, n, \kappa)$.  The assumptions that $d(g^{-1} \overline{x} g, e) \le 1$ and $g \in \Omega$ imply that $\overline{x}$ lies in a compact set $\Omega' \subset G_\infty$ depending on $\Omega$.  Combined with the assumption that $|\text{nr}(x)| = n$, this gives $\| x \| \ll_\Omega n^{1/3}$.

The assumption $n(g^{-1} \overline{x} g, MA) \le \kappa$ gives

\be
\label{gammaclose}
\text{inf} \{ \| x - z \| \, | \, z \in g Z_A g^{-1} \} \le C' \kappa n^{1/3}
\ee
for some $C'$ depending only on $\Omega$.  Lemma \ref{Akshay} below implies that there is $C > 0$ depending on $\Omega$ such that if $\kappa \le C n^{-24}$, then $\cL(g,n,\kappa)$ must be contained in a cubic subfield $F \subset D$.

As $x \in R \cap F$, $x$ must lie in the ring of integers $\cO_F$ of $F$, and as the reduced norm $\text{nr}$ on $D$ agrees with the norm $N$ on $F$ we must have $|N(x)| = n$.  The condition $\| x \| \ll_\Omega n^{1/3}$ implies that the image of $x$ under all archimedean embeddings of $F$ must be $\ll_\Omega n^{1/3}$, and the number of algebraic integers in $F$ satisfying these conditions may easily be seen to be $\ll_{\Omega,\epsilon} n^\epsilon$, uniformly in $F$.

\end{proof}

The following lemma is taken from an unpublished paper of Lior Silberman and Akshay Venkatesh.  We thank the authors for permission to reproduce it here.

\begin{lemma}
\label{Akshay}

Let $S \subset D_\infty$ be an $\R$-subalgebra isomorphic to $\R^3$.  For $C > 0$ sufficiently small depending on $R$, and any $X > 1$ and $0 < \epsilon < 1$, the set of $x \in R$ satisfying

\be
\label{xS}
\| x \| \le X, \quad \inf \{ \| x - s \| \, | \, s \in S \} \le \epsilon
\ee
is contained in a subalgebra $F \subset D$ of dimension at most 3 as long as

\be
\label{CC}
\epsilon X^{71} < C.
\ee

\end{lemma}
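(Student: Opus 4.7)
The strategy has two ingredients: a wedge-product/Minkowski dimension bound on $\Q$-spans of elements of $R$ close to $S$, and a doubling iteration that upgrades this subspace bound to a subalgebra bound, exploiting the fact that $S$ is itself closed under multiplication. The main obstacle is precisely this upgrade: the dimension bound alone constrains only a $\Q$-span, which need not be closed under multiplication in $D$.

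\emph{Dimension bound.} Suppose $v_1, v_2, v_3, v_4 \in R$ all satisfy $\|v_i\| \le Y$ and $d(v_i, S) \le \eta$. Writing $v_i = s_i + e_i$ with $s_i \in S$ and $\|e_i\| \le \eta$, the hypothesis $\dim_\R S = 3$ forces $s_1 \wedge s_2 \wedge s_3 \wedge s_4 = 0$, so every surviving term in the expansion of $v_1 \wedge v_2 \wedge v_3 \wedge v_4$ contains at least one factor $e_i$; therefore
\bes
\| v_1 \wedge v_2 \wedge v_3 \wedge v_4 \| \ll Y^3 \eta.
\ees
If the $v_i$ were $\Q$-linearly independent, the rank-four lattice $\Lambda = \Z v_1 + \cdots + \Z v_4 \subset R$ would have covolume $\ll Y^3 \eta$ in its real span, and Minkowski's theorem would produce a nonzero element of $\Lambda$ of norm $\ll (Y^3 \eta)^{1/4}$; this contradicts the strictly positive lower bound on the length of a nonzero vector in the $\Z$-lattice $R$ as soon as $Y^3 \eta$ is small enough depending only on $R$. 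Hence the $\Q$-span of such $v$ has dimension at most $3$.

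\emph{Doubling to a subalgebra.} For integers $m \ge 1$, let
\bes
V_m = \text{span}_\Q \{\, x \in R : \|x\| \le X^m,\ d(x, S) \le X^{m-1} \epsilon \,\}.
\ees
Applied with $Y = X^m$ and $\eta = X^{m-1}\epsilon$, the preceding bound shows $\dim_\Q V_m \le 3$ whenever $\epsilon X^{4m-1}$ is sufficiently small in terms of $R$. If $v \in V_j$ and $v' \in V_k$, then $vv' \in R$ with $\|vv'\| \le X^{j+k}$; since $S$ is a subalgebra, writing $v = s + e$ and $v' = s' + e'$ gives $ss' \in S$ and $\|vv' - ss'\| \ll X^{j+k-1}\epsilon$, so $V_j \cdot V_k \subseteq V_{j+k}$. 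In particular $V_m \cdot V_m \subseteq V_{2m}$. Now climb the doubling chain $V_1 \subseteq V_2 \subseteq V_4 \subseteq V_8 \subseteq \cdots$ far enough that the hypothesis $\epsilon X^{71} < C$ still forces every term to have dimension at most $3$. Since the dimensions form a nondecreasing sequence of integers in $\{0,1,2,3\}$, some doubling step must be an equality: $V_{2^j} = V_{2^{j+1}}$ for some $j$. For this $j$, $V_{2^j} \cdot V_{2^j} \subseteq V_{2^{j+1}} = V_{2^j}$, so $F := V_{2^j}$ is a $\Q$-subalgebra of $D$ of dimension at most $3$ containing $V_1$, and in particular containing every $x \in R$ satisfying (\ref{xS}). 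The precise exponent $71$ reflects the highest doubling level that must be controlled in this chain, together with the constants absorbed through iterated products.
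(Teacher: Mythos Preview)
Your approach is sound and proves the lemma, but by a different route from the paper's. Both arguments share the same core linear-algebra observation: four elements of $R$ lying within $\eta$ of the three-dimensional space $S$ and bounded by $Y$ must be $\Q$-dependent once $Y^3\eta$ is small enough. (Your wedge-product bound is exactly the paper's polynomial $P$, the sum of squares of $4\times 4$ minors in an integral basis of $R$; integrality of $P$ already forces $\|v_1\wedge\cdots\wedge v_4\|\gg_R 1$ when the $v_i$ are independent, so the appeal to Minkowski is unnecessary.) The difference lies in the upgrade to a subalgebra. The paper argues directly: the $\Q$-algebra $F$ generated by the set is spanned by monomials of length at most $9=\dim_\Q D$ in the original elements, each such monomial $y$ satisfies $\|y\|\ll X^9$ and $d(y,S)\ll X^8\epsilon$, and applying the dependence bound to four of these monomials (a degree-$8$ polynomial, so one perturbed factor and seven at full size) gives $P\ll (X^9)^7\cdot X^8\epsilon=X^{71}\epsilon$, whence $\dim F\le 3$. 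Your doubling chain $V_1\subseteq V_2\subseteq V_4\subseteq\cdots$ with pigeonhole stabilisation is a perfectly good alternative, but two points deserve care. First, the inclusion $V_m\cdot V_m\subseteq V_{2m}$ only holds up to absolute constants (e.g.\ $d(xy,S)\le 5X^{2m-1}\epsilon$, not $X^{2m-1}\epsilon$), so the definition of $V_m$ should carry a factor like $C_0^m$ to make the containments literal. Second, your final sentence about the exponent $71$ is not what your own argument produces: with $\dim V_1\ge 1$ the nondecreasing chain in $\{1,2,3\}$ must have a consecutive repeat among $V_1,V_2,V_4,V_8$, so the largest level you need to control is $m=8$, giving an exponent around $31$ (or $63$ without the assumption $\dim V_1\ge 1$). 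This is \emph{stronger} than the stated hypothesis $\epsilon X^{71}<C$, so the lemma follows; but $71$ is an artefact of the paper's monomial argument, not of yours. The paper's route is shorter and yields the explicit constant in one line; your route is more structural and, if tracked carefully, would give a better exponent.
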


\begin{proof}

Fix an integral basis $\{ v_i \}_{i=1}^9$ of $R$.  If $(\alpha_1, \alpha_2, \alpha_3, \alpha_4) \in D^4$, define $P(\alpha_1, \ldots, \alpha_4)$ by expressing each $\alpha_i$ in the basis $\{ v_i \}$, combining the resulting vectors into a $4 \times 9$ matrix, and taking the sums of the squares of the $4 \times 4$ minors.  This makes $P : D^4 \rightarrow \Q$ a polynomial map of degree 8, with integral coefficients with respect to $R$, such that $P(\alpha_1, \ldots, \alpha_4) = 0$ exactly when $\alpha_1, \ldots, \alpha_4$ span a linear subspace of dimension $\le 3$.

Let $F$ be the $\Q$-algebra generated by those $x \in R$ satisfying (\ref{xS}).  It is clear that $F$ is in fact spanned by monomials in such $x$ of length at most $9 = \dim_\Q D$.  Each such monomial $y$ lies in $R$, and satisfies the bounds

\be
\label{xS2}
\| y \| \ll X^9, \quad \inf \{ \| y - s \| \, | \, s \in S \} \ll X^8 \epsilon.
\ee
Take $y_1, \ldots, y_4 \in R$ satisfying (\ref{xS2}).  There are $z_1, \ldots, z_4 \in S$ such that $\| y_i - z_i \| \ll X^8 \epsilon$, and so $P(y_1, \ldots, y_4) \ll (X^9)^7 X^8 \epsilon = X^{71} \epsilon$.  On the other hand, if $P(y_1, \ldots, y_4) \neq 0$ then we must have $|P(y_1, \ldots, y_4)| \ge 1$ by the condition that $P$ had integral coefficients with respect to $R$.  It follows that if a condition of the form (\ref{CC}) holds for $C$ as stated, then $y_1, \ldots, y_4$ span a $\Q$-linear space of dimension at most 3.  It follows that $F$ has dimension at most 3, which completes the proof.

\end{proof}

With these results, we may now estimate (\ref{phiprod}).  In the calculation occupying the rest of this section, all implied constants will depend on $B^*$, $\Omega$, and $\epsilon$.  We have

\bes
\cT \cT^* = \sum_{ p \neq q \le N } T_p T_q^* + \sum_{p \le N} T_p T_p^*.
\ees
We may bound

\bes
\sum_{\gamma \in \bG(\Q)} \sum_{p \neq q \le N} | T_p T_q^*(\gamma) I(t, \phi, g_0^{-1} \gamma g_0)|
\ees
by the sum of four expressions, the first of which is

\bes
\sum_{p \neq q \le N} \sum_{\gamma \in \bG(\Q)} \frac{1}{pq} \Phi(pq,p,1)(\gamma) |I(t, \phi, g_0^{-1} \gamma g_0)|.
\ees
The bound (\ref{Ibound2}) and our assumption that $N \le t$ allow us to restrict the sum over $\gamma$ to $\cM(g_0, pq, p, 1, t^{-1/2+\epsilon} \beta^{1/2})$.  If we assume that $N \le t^{1/600}$, we may combine Lemma \ref{returnlem}, Proposition \ref{returnprop}, and the bound (\ref{Ibound1}) to obtain

\bes
\sum_{\gamma \in \bG(\Q)} \Phi(pq,p,1)(\gamma) |I(t, \phi, g_0^{-1} \gamma g_0)| \ll t^{3/2+\epsilon} \beta^{9/2}.
\ees
We therefore have

\be
\label{pqbound}
\sum_{p \neq q \le N} \sum_{\gamma \in \bG(\Q)} \frac{1}{pq} \Phi(pq,p,1)(\gamma) |I(t, \phi, g_0^{-1} \gamma g_0)| \ll t^{3/2+\epsilon} \beta^{9/2} \sum_{p \neq q \le N} \frac{1}{pq} \ll t^{3/2+\epsilon} \beta^{9/2}.
\ee

The other three expressions we must consider are similar, but with $\Phi(pq,p,1) / pq$ replaced with $\Phi(p^2 q,p,1) / p^2q$, $\Phi(pq^2,pq,1) / pq^2$, and $\Phi(p^2q^2,pq,1) / p^2q^2$ respectively.  We may bound them in the same way using our assumption that $N \le t^{1/600}$ .

The analysis of the $T_p T_p^*$ terms is similar.  Lemma \ref{Tpexpand} implies that we must consider terms of the form $\Phi(p^a, p^b, 1) / p^a$ with $a \ge b \ge 0$, and the inequalities (\ref{abcbd}) imply that $a + b + c \le 6$.  Arguing as above using the bound $N \le t^{1/600}$ gives

\bes
\sum_{\gamma \in \bG(\Q)} | T_p T_p^*(\gamma) I(t, \phi, g_0^{-1} \gamma g_0)| \ll t^{3/2+\epsilon} \beta^{9/2}.
\ees
If we sum over $p$ and combine this with (\ref{pqbound}), we obtain

\bes
\sum_{\gamma \in \bG(\Q)} | (\cT \cT^*)(\gamma) I(t, \phi, g_0^{-1} \gamma g_0) | \ll N t^{3/2+\epsilon} \beta^{9/2},
\ees
and Lemma \ref{amplemma} gives

\bes
\langle \cT \psi, b \phi \rangle \ll N^{1/2} t^{3/4+\epsilon} \beta^{9/4}.
\ees
The bound $\cT(\psi) \gg_{X,\epsilon} N^{1-\epsilon}$ gives $\langle \psi, b \phi \rangle \ll N^{-1/2} t^{3/4+\epsilon} \beta^{9/4}$.  Choosing $N = t^{1/600}$ completes the proof of Proposition \ref{L2onspec}.

\section{Bounds away from the spectrum}
\label{sec5}

Let $H_\beta$ be as in Section \ref{outline}, and let $\phi \in H_\beta^\perp$ satisfy $\| \phi \|_2 = 1$.  In this section, we prove the following bound for $\langle b \psi, \phi \rangle$.

\begin{prop}
\label{L2offspec}

For any $\epsilon > 0$ there is $C = C(B^*, \Omega, \epsilon) > 0$ such that $\langle b \psi, \phi \rangle \le C t^{3/4} \beta^{-1/4 + \epsilon}$.

\end{prop}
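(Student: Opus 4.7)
The plan is to apply the spectral/geometric identity (\ref{specint1}) from the introduction with the trivial amplifier $\cT = \id$, in place of the nontrivial $\cT$ used in Proposition \ref{L2onspec}. Choose $k_t$ via Gangolli's Paley--Wiener theorem so that $\text{supp}(k_t)$ is small enough, depending on $\Omega$, that only $\gamma = e$ contributes to the geometric side of (\ref{specint1}). Since $\exp(-H_1)\exp(H_2) = \exp(H_2-H_1)$ in $A$, the group variable $g_0$ drops out entirely and the identity reduces to the purely local inequality
\begin{equation*}
|\langle b\psi, \phi \rangle|^2 \le \left| \iint_{\ga \times \ga} \overline{b\phi(H_1)} b\phi(H_2) \overline{k_t}(\exp(H_2 - H_1)) \, dH_1 \, dH_2 \right|.
\end{equation*}
No arithmetic input is required for this step; the estimate is a purely local statement about oscillatory integrals on $SL_3(\R)$.

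I would next pass to the Fourier side on $\ga$. Setting $F = b\phi$ and $G(H) = \overline{k_t}(\exp H)$, the substitution $U = H_2 - H_1$ followed by Plancherel rewrites the right-hand side as a weighted quadratic form
\begin{equation*}
(2\pi)^{-\dim \ga} \int_{\ga^*} \widehat{G}(\eta) \, |\widehat{F}(\eta)|^2 \, d\eta,
\end{equation*}
so the entire problem reduces to understanding $\widehat{G}$. The analytic input here is the uniform-in-group-variable asymptotic expansion for $k_t(\exp H)$ furnished by Theorems 1.3 and 1.4 of \cite{Ma2}: schematically
\begin{equation*}
k_t(\exp H) = t^{3/2} \sum_{w \in W} e^{i w t\lambda(H)} a_w(t; H) + (\text{lower order}),
\end{equation*}
with smooth amplitudes $a_w$ controlled uniformly in $t$ and $\lambda \in B^*$. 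Taking the Fourier transform, $\widehat{G}$ is concentrated in unit-scale bumps centered on the Weyl orbit $Wt\lambda$; but because $H_2 - H_1$ ranges over a neighborhood of $0 \in \ga$, the mildly singular behavior of the spherical function near the walls produces a slowly decaying profile of the form $|\widehat{G}(\eta)| \ll t^{3/2}\, d(\eta, Wt\lambda)^{-1/2+\epsilon}$ for $d(\eta, Wt\lambda) \ge 1$.

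The hypothesis $\phi \in H_\beta^\perp$ forces $\widehat{\phi}$ to vanish on the $\beta$-balls around $Wt\lambda$, and the rapid decay of $\widehat{b}$ (which is entire by Paley--Wiener) then makes $\widehat{F} = \widehat{b} * \widehat{\phi}$ Schwartz-small on $\beta/2$-balls around $Wt\lambda$, contributing at most $O(t^{-A})$ for any $A$. Estimating the main region $\{d(\eta, Wt\lambda) \ge \beta/2\}$ by the profile above together with $\|\widehat{F}\|_2 = \|F\|_2 \le \|b\|_\infty$ then gives
\begin{equation*}
|\langle b\psi, \phi\rangle|^2 \ll t^{3/2} \beta^{-1/2 + \epsilon},
\end{equation*}
and taking square roots yields the proposition. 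The main obstacle is extracting the precise $d(\eta, Wt\lambda)^{-1/2}$ profile of $\widehat{G}$ uniformly in $t$, in $\lambda \in B^*$, and in the direction of $\eta - Wt\lambda$. The exponent $-1/2$ (and hence $-1/4$ in $\beta$ after the square root) is the Fourier-side manifestation of the ``kink point'' in the local $L^p$ restriction bounds on $SL_3$, and it matches the $\beta^{9/2}$ on-spectrum loss in Proposition \ref{Ibound}; it is precisely here that the strongly uniform asymptotics of Theorems 1.3 and 1.4 of \cite{Ma2} are indispensable.
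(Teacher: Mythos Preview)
Your proposal is correct and follows essentially the same route as the paper: reduce to the identity term via a small-support $k_t$, rewrite the resulting integral as $\langle p_t * b\phi, b\phi\rangle$ with $p_t(H)=k_t(\exp H)$, pass to the Fourier side on $\ga$, and then invoke the uniform spherical-function asymptotics of \cite{Ma2} to prove the profile $\widehat{p_t}(\mu)\ll t^{3/2}\,d(\mu,Wt\lambda)^{-1/2+\epsilon}$ (the paper packages this last step as a separate lemma and carries it out via a dyadic decomposition in $\|H\|_s$). One minor slip: the near-spectrum contribution of $\widehat{F}$ is $O_A(\beta^{-A})$ rather than $O_A(t^{-A})$, but after multiplying by $\|\widehat{p_t}\|_\infty\ll t^{3/2}$ this is still absorbed by the main term.
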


\begin{proof}

If we apply Lemma \ref{amplemma} with $\cT$ chosen to be the characteristic function of a sufficiently small open subgroup of $\bG(\A_f)$, depending on $\Omega$, then only $\gamma = 1$ will make a nonzero contribution to the sum.  This gives

\bes
|\langle \psi, b \phi \rangle|^2 \ll_{\Omega} \left| \iint_\ga b\phi(H_1) \overline{b\phi}(H_2) k_t(\exp(H_2 - H_1)) dH_1 dH_2 \right|.
\ees
If we define $p_t(H) = k_t( \exp(H))$, we may rewrite this as $|\langle \psi, b \phi \rangle|^2 \ll_{\Omega} |\langle p_t *b \phi, b \phi \rangle|$.

For $\nu \in \ga^*$ and $C \ge 0$, we define $B(W \nu, C)$ to be the union of the balls of radius $C$ around the points in $W \nu$ with respect to $\| \cdot \|$.  Write $b \phi = \phi_1 + \phi_2$, where $\widehat{\phi_1}$ is supported on $B(W t\lambda, \beta/2)$ and $\widehat{\phi_2}$ is supported on $\ga^* \setminus B(W t\lambda, \beta/2)$.  Because $b$ was a fixed smooth function, we have $\| \phi_1 \|_2 \ll_{A} \beta^{-A}$.  We have

\begin{align*}
\langle p_t * b \phi, b \phi \rangle & = \langle p_t * \phi_1, \phi_1 \rangle + \langle p_t * \phi_2, \phi_2 \rangle \\
& \le O_{A}(\beta^{-A}) \| \widehat{p_t} \|_\infty + \underset{\mu \notin B(W t\lambda, \beta/2)}{\sup} |\widehat{p_t}(\mu)|.
\end{align*}
The result now follows from the lemma below.

\end{proof}

\begin{lemma}

We have $\| \widehat{p_t} \|_\infty \ll_{B^*} t^{3/2}$, and

\be
\label{phat}
\widehat{p_t}(\mu) \ll_{B^*,\epsilon} t^{3/2} \beta^{-1/2 + \epsilon}
\ee
for $\mu \notin B(W t\lambda, \beta/2)$.

\end{lemma}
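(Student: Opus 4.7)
The plan is to apply the spherical Plancherel inversion and the Harish-Chandra asymptotic expansion. Writing
\[
p_t(H) = \frac{1}{|W|}\int_{\ga^*} h_t(\mu)\,\varphi_\mu(\exp H)\,|c(\mu)|^{-2}\,d\mu,
\]
and noting that $h_t$ is essentially supported on $B(Wt\lambda,1)$ where $|c(\mu)|^{-2}\sim t^{3}$ (since $\lambda$ is regular), while by Theorem~1.4 of \cite{Ma2} we have $|\varphi_\mu(\exp H)|\ll t^{-3/2}$ uniformly on compact subsets of a Weyl chamber, these estimates combine to give $\|p_t\|_\infty\ll t^{3/2}$ on the regular set. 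Since $p_t$ has compact support in $B_\ga$ and the walls have measure zero, this yields $\|p_t\|_1\ll t^{3/2}$ and hence the first bound $\|\widehat{p_t}\|_\infty\le\|p_t\|_1\ll t^{3/2}$.

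For the decay bound I would substitute the leading Harish-Chandra asymptotic
\[
\varphi_\mu(\exp H) = e^{-\rho(H)}\sum_{w\in W} c(w\mu)\,e^{i(w\mu)(H)} + O\bigl(e^{-\rho(H)}t^{-5/2}\bigr)
\]
from Theorem~1.3 of \cite{Ma2} into the inversion formula. Using the $W$-invariance of $h_t$ and of $|c(\mu)|^{2}$ and the change of variable $\mu\mapsto w^{-1}\mu$, the Weyl sum collapses and one obtains, on the regular positive chamber, $p_t(H)\approx e^{-\rho(H)}\tilde h(H)$ with $\tilde h(H) = \int_{\ga^*} h_t(\mu)\,\overline{c(\mu)}^{-1}\,e^{i\mu(H)}\,d\mu$; the function is then extended to all of $\ga$ by $W$-invariance of $p_t$. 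Inserting this into $\widehat{p_t}(\nu)$, splitting the $H$-integral over Weyl chambers, and exchanging the orders of integration reduces the problem to a Weyl sum of integrals
\[
\int h_t(\mu)\,\overline{c(\mu)}^{-1}\int_{\ga^+} e^{i(\mu - w\nu)(H)-\rho(H)}\,dH\,d\mu.
\]
In the simple-root coordinates $s = \alpha_1(H),\, u = \alpha_2(H)$ on $\ga$, the inner integral factorises into two rank-one integrals and equals $C/((1 - i\xi^1)(1 - i\xi^2))$, where $(\xi^1,\xi^2)$ are the coordinates of $\xi = \mu - w\nu$ in the $\{\alpha_1,\alpha_2\}$ basis of $\ga^*$. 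The hypothesis $\nu\notin B(Wt\lambda,\beta/2)$ forces $\|\xi\|\gtrsim\beta$ uniformly on $\text{supp}(h_t)$ for every $w$, so at least one of $|\xi^j|\gtrsim\beta$ and the product is $\ll\beta^{-1}$; multiplying by $|\overline{c(\mu)}^{-1}|\sim t^{3/2}$ and the $O(1)$-volume of $\text{supp}(h_t)$ yields $\widehat{p_t}(\nu)\ll t^{3/2}\beta^{-1}$, which is strictly stronger than the stated $t^{3/2}\beta^{-1/2+\epsilon}$.

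The main obstacle is the contribution of $H$ in the thin strips of width $O(t^{-1})$ near the walls of $\ga^+$, where the Harish-Chandra asymptotic expansion degenerates and the explicit factorisation of the inner oscillatory integral above breaks down. I would handle these strips by a dyadic decomposition combined with the uniform-in-$H$ spherical function bounds of \cite{Ma2}, which are slightly weaker near the walls but suffice after summation; the resulting logarithmic loss is precisely what the $\epsilon$ in the stated exponent is there to absorb. The $O(t^{-1})$ relative remainder in the Harish-Chandra expansion is genuinely smaller than the leading term and contributes only lower-order corrections.
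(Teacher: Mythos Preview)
Your overall strategy---reducing $\widehat{p_t}(\mu)$ to an integral of spherical functions against $e^{-i\mu(H)}$ and exploiting oscillation---matches the paper's, but the execution has a gap that makes the stronger conclusion $\ll t^{3/2}\beta^{-1}$ illusory.

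The problem is the inner integral $\int_{\ga^+} e^{i(\mu-w\nu)(H)-\rho(H)}\,dH$ over the \emph{entire} chamber. You arrive at it by (i) substituting the Harish-Chandra expansion $\varphi_\mu(\exp H)\approx e^{-\rho(H)}\sum_w c(w\mu)e^{i(w\mu)(H)}$ into the inverse spherical transform to get $p_t(H)\approx e^{-\rho(H)}\tilde h(H)$ on $\ga^+$, and then (ii) integrating this approximation over all of $\ga^+$. But $p_t$ has compact support in $B_\ga$, so the true integral is $\int_{\ga^+\cap B_\ga}p_t(H)e^{-iw\nu(H)}\,dH$; replacing it by $\int_{\ga^+}e^{-\rho(H)}\tilde h(H)e^{-iw\nu(H)}\,dH$ introduces a tail $\int_{\ga^+\setminus B_\ga}e^{-\rho(H)}\tilde h(H)e^{-iw\nu(H)}\,dH$ which is \emph{a priori} of size $t^{3/2}$ (since $|\tilde h|\ll t^{3/2}$ and $\int_{\ga^+\setminus B_\ga}e^{-\rho(H)}\,dH=O(1)$) and carries no $\beta$-saving. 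The explicit factorisation $C/\prod_j(1-i\xi^j)$ thus computes the wrong integral. A related issue is that the $c$-function expansion you quote is the large-$H$ asymptotic; for $H$ in a fixed bounded set the correct amplitudes are the functions $f_w(H,\nu,t)$ of Theorem~1.4 of \cite{Ma2}, not $c(w\mu)e^{-\rho(H)}$.

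The paper avoids this by never leaving the compact support: it keeps a smooth cutoff $b_2(H)$ throughout, substitutes the uniform expansion~(\ref{spherical2}) with amplitudes $f_w$ obeying the derivative bounds~(\ref{fwbd}), performs a dyadic decomposition in the wall-distance $\|H\|_s$, and integrates by parts once on each shell. The exponent $\beta^{-1/2+\epsilon}$ (rather than $\beta^{-1}$) comes from the innermost shell of width $\sim\beta^{-1}$, where only the pointwise bound~(\ref{spherical1}) is used and no oscillation is exploited. Your final paragraph correctly flags the near-wall region as the delicate one, but the repair you propose addresses only the breakdown of the expansion there, not the unjustified extension to $\ga^+\setminus B_\ga$.

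A smaller point: for the first bound, ``walls have measure zero'' does not by itself yield $\|p_t\|_1\ll t^{3/2}$, since $|p_t(H)|$ is as large as $t^3$ near $H=0$; one needs the quantitative estimate $|p_t(H)|\ll t^3\prod_{\alpha\in\Delta^+}(1+t|\alpha(H)|)^{-1/2}$ from \cite{Ma2} and then to integrate it over $B_\ga$.
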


\begin{proof}

The first assertion is proven in Section 3 of \cite{Ma2}, or can be deduced easily from the bound

\bes
k_t(\exp(H)) \ll_{B^*} t^3 \prod_{\alpha \in \Delta^+} (1 + t |\alpha(H)| )^{-1/2}
\ees
proven in Lemma 2.6 of \cite{Ma2}.  We shall prove (\ref{phat}) with the aid of two asymptotic formulae for $\varphi_{t\nu}$ taken from \cite{Ma2}.  Let $B_0^*$ be a compact subset of $\ga^*_r$ that contains an open neighbourhood of $B^*$.  In the rest of the proof, all implied constants will depend on $B_0^*$ unless otherwise stated.  For $H \in \ga$ we define 

\bes
\| H \|_s = \inf \{ \| H - Z \| | Z \in \ga_s \}.
\ees
We may apply Theorems 1.3 and 1.4 of \cite{Ma2} to deduce that if $H \in 2 B_\ga$ and $\nu \in B_0^*$, we have

\be
\label{spherical1}
\varphi_{t\nu}(\exp(H)) \ll \prod_{\alpha \in \Delta^+} (1 + t |\alpha(H)| )^{-1/2},
\ee
and there are functions $f_w \in C^\infty((2B_\ga \cap \ga_r) \times B_0^* \times \R_{> 0} )$ for $w \in W$ such that

\be
\label{fwbd}
\left( \frac{\partial}{\partial H} \right)^n f_w(H, \nu, t) \ll_{n} \| H \|_s^{-n} \prod_{\alpha \in \Delta^+} ( t |\alpha(H)| )^{-1/2}
\ee
and

\be
\label{spherical2}
\varphi_{t\nu}(\exp(H)) = \sum_{w \in W} f_w(H, \nu,t) \exp( it\nu(wH)) + O_{A}( (t \| H \|_s)^{-A}) \prod_{\alpha \in \Delta^+} ( t |\alpha(H)| )^{-1/2}.
\ee

Let $\chi$ be the characteristic function of the set of points in $\ga$ that are at distance at most 2 from $\ga_s$.  Fix a function $b_0 \in C^\infty_0(\ga)$ with integral 1 and $\text{supp}(b_0) \subseteq B_\ga$.  If we define $b_1 = \chi * b_0$, we have $b_1(H) = 1$ if $\| H \|_s \le 1$.  Fix a function $b_2 \in C^\infty_0(\ga)$ with $\text{supp}(b_2) \subset 2B_\ga$ that is equal to 1 on $B_\ga$.  Our assumption that $\textrm{supp}(k_t(\exp(H))) \subseteq B_\ga$ implies that

\bes
\widehat{p_t}(\mu) = \int_\ga b_2(H) k_t(\exp(H)) e^{-i\mu(H)} dH.
\ees
We have the trivial bound

\be
\label{phat1}
\int_\ga b_2(H) \varphi_{t\nu}(\exp(H)) e^{-i\mu(H)} dH \ll 1,
\ee
where the implied constant is independent of $B_0^*$, and (\ref{spherical1}) gives

\be
\label{phat2}
\int_\ga b_2(H) \varphi_{t\nu}(\exp(H)) e^{-i\mu(H)} dH \ll t^{-3/2}
\ee
for $\nu \in B_0^*$.  If we could prove that

\be
\label{phat3}
\int_\ga b_2(H) \varphi_{t\nu}(\exp(H)) e^{-it\mu(H)} dH \ll_{\epsilon} t^{-3/2} \beta^{-1/2 + \epsilon}
\ee
for $\mu \notin B(W \lambda, \beta/2t)$ and $\nu \in B(W \lambda, \beta/4t)$, then (\ref{phat}) would follow by combining (\ref{phat1}) -- (\ref{phat3}), inverting the Harish-Chandra transform, and using the rapid decay of $h_t$ away from the set $W t \lambda$ as in the proof of Lemma 4.4. of \cite{Ma1} or Lemma 2.6 of \cite{Ma2}.

We decompose the LHS of (\ref{phat3}) as

\begin{multline*}
\int_\ga b_2(H) b_1( \beta H) \varphi_{t\nu}(\exp(H)) e^{-it\mu(H)} dH \\
+ \sum_{n = 1}^\infty \int_\ga b_2(H)( b_1( 2^{-n} \beta H)  - b_1( 2^{-n+1} \beta H))) \varphi_{t\nu}(\exp(H)) e^{-it\mu(H)} dH.
\end{multline*}
The bound (\ref{spherical1}) implies that the first integral is $\ll_{\epsilon} t^{-3/2}\beta^{-1/2 + \epsilon}$ as in Section 2.5 of \cite{Ma2}.  As $\nu \in B_0^*$ for $t$ large, we may estimate the second integral by applying (\ref{spherical2}).   We shall only consider the term $w = e$, as the others are identical.  We wish to estimate

\bes
\int_\ga b_2(H)(b_1( 2^{-n} \beta H)  - b_1( 2^{-n+1} \beta H))) f_e(H,\nu,t) \exp( it(\nu - \mu)(H) ) dH.
\ees
After replacing $H$ with $2^{n} \beta^{-1} H$ this becomes

\be
\label{dyadicint}
2^{2n} \beta^{-2} \int_\ga b_2( 2^n \beta^{-1} H)(b_1(H)  - b_1( 2H))) f_e(2^n \beta^{-1} H,\nu,t) \exp( it 2^n \beta^{-1} (\nu - \mu)(H) ) dH.
\ee
Our assumptions on $\mu$ and $\nu$ imply that $t 2^n \beta^{-1} \| \nu - \mu \| \ge 2^{n-2}$, and (\ref{fwbd}) implies that all derivatives of the amplitude factor in (\ref{dyadicint}) are

\bes
\ll t^{-3/2} \beta^{3/2} 2^{-3n/2} \prod_{\alpha \in \Delta^+} |\alpha(H)|^{-1/2}.
\ees
Integrating by parts once, we obtain

\begin{align*}
(\ref{dyadicint}) & \ll t^{-3/2} \beta^{-1/2} 2^{-n/2} \int b_2( 2^n \beta^{-1} H)(b_1(H)  - b_1( 2H))) \prod_{\alpha \in \Delta^+} |\alpha(H)|^{-1/2} dH \\
& \ll_{\epsilon} t^{-3/2} \beta^{-1/2 + \epsilon} 2^{(-1/2 -\epsilon)n}.
\end{align*}
Summing over $n$ completes the proof.

\end{proof}

\section{Oscillatory integrals}
\label{sec6}

We now prove the bounds of Proposition \ref{Ibound} for $I(t, \phi, g)$.  We continue to use the notation introduced earlier in the paper, with the exceptions that $B^*$ now denotes a fixed compact subset of $\ga^*_r$ that contains an open neighbourhood of the set $B^*$ used earlier, and we now set $K = SO(3)$.  If $\nu \in \ga^*$, we let $H_\nu \in \ga$ denote the element dual to it under the Killing form.  We write the Iwasawa decomposition on $SL_3(\R)$ as

\be
\label{Iwasawa}
g = n(g) \exp(A(g)) k(g) = \exp(N(g)) \exp(A(g)) k(g).
\ee
We begin by calculating a certain derivative of the function $A$.

\begin{lemma}
\label{Adiff}

Let $g \in SL_3(\R)$ have Iwasawa decomposition $g = nak$.  If $H_1$, $H_2 \in \ga$, then

\bes
\frac{\partial}{\partial s} \langle H_1, A( g \exp(sH_2)) \rangle \Big|_{s=0} = \langle H_1, \textup{Ad}(k) H_2 \rangle.
\ees

\end{lemma}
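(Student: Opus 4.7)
The plan is to reduce to a derivative at the identity of the Iwasawa $A$-coordinate, and then use that $\ga$ is orthogonal to both $\gn$ and $\gk$ under the Killing form.

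First I would use the conjugation identity $k \exp(sH_2) = \exp(s\Ad(k)H_2)\,k$ to rewrite
\bes
g \exp(sH_2) = n a \exp(sX)\,k, \qquad X := \Ad(k)H_2.
\ees
I would then observe that for any $h \in SL_3(\R)$ with Iwasawa decomposition $h = n'a'k'$, the product $nah = \bigl(n\cdot a n' a^{-1}\bigr)(a a') k'$ still has the first factor in $N$ (since $A$ normalises $N$), so $A(nah) = A(a)+A(h)$. Applied with $h = \exp(sX)\,k$ and using $A(k)=0$, this gives
\bes
A\bigl(g \exp(sH_2)\bigr) = A(a) + A(\exp(sX)).
\ees

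Next I would differentiate $s \mapsto A(\exp(sX))$ at $s=0$. Since the Iwasawa multiplication map $N\times A\times K \to SL_3(\R)$ has derivative at the identity equal to the sum map $\gn\oplus\ga\oplus\gk \to \g$, its inverse has derivative equal to the projection onto the three factors. Writing $X = X_\gn + X_\ga + X_\gk$ in the Iwasawa decomposition of $\g$, this yields
\bes
\left.\frac{\partial}{\partial s}\right|_{s=0} A\bigl(g\exp(sH_2)\bigr) = X_\ga = \bigl(\Ad(k)H_2\bigr)_\ga.
\ees

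Finally, pairing with $H_1 \in \ga$ under the Killing form, I would invoke the standard fact that $\ga$ is orthogonal to both $\gn$ and $\gk$: orthogonality to $\gn = \bigoplus_{\alpha \in \Delta^+} \g_\alpha$ holds because each root space is an eigenspace of $\ad(H_1)$ with nonzero eigenvalue, and orthogonality to $\gk$ follows from the Cartan decomposition $\g = \gk\oplus\p$ with $\ga\subset\p$. Therefore $\langle H_1, X_\ga\rangle = \langle H_1, X\rangle = \langle H_1, \Ad(k) H_2\rangle$, which is the claimed identity. There is no real obstacle here; the only point requiring care is the bookkeeping with the Iwasawa factorization and the orthogonality statement, both of which are standard.
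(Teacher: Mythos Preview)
Your proof is correct and follows essentially the same route as the paper's: reduce by left $NA$-invariance of the $A$-coordinate (up to a constant shift), push $k$ past $\exp(sH_2)$ via conjugation, and differentiate $A(\exp(sX))$ at the identity. The paper compresses the last step into a single line without mentioning the Iwasawa projection or the orthogonality $\ga \perp \gn \oplus \gk$, whereas you spell these out; but the argument is the same.
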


\begin{proof}

We have

\begin{align*}
\frac{\partial}{\partial s} \langle H_1, A( g \exp(sH_2)) \rangle \Big|_{s=0} & = \frac{\partial}{\partial s} \langle H_1, A( k \exp(sH_2)) \rangle \Big|_{s=0} \\
& = \frac{\partial}{\partial s} \langle H_1, A( \exp(s \Ad(k) H_2)) \rangle \Big|_{s=0} \\
& = \langle H_1, \Ad(k) H_2 \rangle.
\end{align*}

\end{proof}

For $g \in SL_3(\R)$, let $\Phi_g : K \rightarrow K$ be the map sending $k$ to $k(kg)$.  

\begin{lemma}

$\Phi_g$ is a diffeomorphism that depends smoothly on $g$.

\end{lemma}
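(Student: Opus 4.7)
The plan is to write down an explicit smooth inverse for $\Phi_g$ and deduce both assertions at once. The key fact is that the Iwasawa multiplication map $N \times A \times K \to SL_3(\R)$, $(n,a,k) \mapsto nak$, is a global diffeomorphism, so the projection $h \mapsto k(h)$ from $SL_3(\R)$ to $K$ is smooth. Composing with the smooth map $(k,g) \mapsto kg$ shows that $(k,g) \mapsto \Phi_g(k) = k(kg)$ is jointly smooth on $K \times SL_3(\R)$; in particular each $\Phi_g$ is smooth and the family depends smoothly on $g$.

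To produce the inverse I will define $\Psi_g : K \to K$ by
\bes
\Psi_g(k') = k(k' g^{-1}),
\ees
which is smooth in $(k',g)$ by the same reasoning. The identities $\Phi_g \circ \Psi_g = \Psi_g \circ \Phi_g = \id_K$ will be verified from the uniqueness of the Iwasawa decomposition together with the normalisation relation $a N a^{-1} = N$. Indeed, if $\Phi_g(k) = k'$ then $kg = n_0 a_0 k'$ for uniquely determined $n_0 \in N$ and $a_0 \in A$, and solving gives
\bes
k' g^{-1} = a_0^{-1} n_0^{-1} k = (a_0^{-1} n_0^{-1} a_0) \, a_0^{-1} \, k.
\ees
Since $a_0^{-1} n_0^{-1} a_0 \in N$, this is the $NAK$-decomposition of $k' g^{-1}$, so $\Psi_g(k') = k$. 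The opposite composition is even more direct: if $\Psi_g(k') = k$ then $k' g^{-1} = n_1 a_1 k$, whence $kg = n_1^{-1} a_1^{-1} k'$ is already in $NAK$-order with $K$-part $k'$, giving $\Phi_g(k) = k'$.

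I do not anticipate a serious obstacle. The only nontrivial step is the reordering of an $AN$-expression into $NA$-order, which is a one-line manipulation using that $A$ normalises $N$; everything else is immediate from the smoothness and uniqueness properties of the Iwasawa decomposition.
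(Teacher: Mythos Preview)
Your approach is essentially the same as the paper's: both use the smoothness of the Iwasawa projection $h\mapsto k(h)$, and both identify the inverse of $\Phi_g$ as $k'\mapsto k(k'g^{-1})$. The paper phrases this as separate injectivity/surjectivity checks and then observes that the resulting inverse is smooth, while you write the inverse down explicitly and verify the two compositions; the content is the same.

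One small algebraic slip: in your ``opposite composition'' step, from $k'g^{-1}=n_1 a_1 k$ you obtain $kg = a_1^{-1} n_1^{-1} k'$, not $n_1^{-1} a_1^{-1} k'$. So this is not ``already in $NAK$-order''; you need the same $A$-normalises-$N$ reordering you used in the first direction, writing $a_1^{-1} n_1^{-1} = (a_1^{-1} n_1^{-1} a_1)\,a_1^{-1}\in NA$. With that correction the argument goes through.
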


\begin{proof}

The smoothness of the Iwasawa decomposition implies that $\Phi_g$ is smooth, and depends smoothly on $g$.  The inclusion

\be
\label{phicond}
\Phi_g(k) g^{-1} \in NAk
\ee
implies that $\Phi_g$ is injective.  To prove surjectivity, let $k_0 \in K$ and define $k_1 \in K$ by $k_0 g^{-1} \in NA k_1$.  The condition (\ref{phicond}) implies that $\Phi_g(k_1) g^{-1} \in NA k_1 = NA k_0 g^{-1}$.  This implies that $\Phi_g(k_1) \in NA k_0$, so $\Phi_g(k_1) = k_0$ and $\Phi_g$ is surjective.  If $\Phi_g^{-1}$ is the inverse function, (\ref{phicond}) gives $k g^{-1} \in NA \Phi_g^{-1}(k)$ which implies that $\Phi_g^{-1}$ is smooth.

\end{proof}

If $g \in NA$ and $m \in M$, we have

\be
\label{mphi}
\Phi_g(m) = m, \quad \Phi_g(mk) = m \Phi_g(k).
\ee
We now estimate two integrals that comprise $I(t, \phi, g)$.

\begin{prop}
\label{aintprop1}

Fix $C$, $\epsilon > 0$, a compact set $D_B \subset NA$, and a function $b \in C^\infty_0(\ga)$ with $\textup{supp}(b) \subseteq B_\ga$.  If $k \in K$ and $\lambda$, $\nu \in B^*$ satisfy $\| \lambda - \nu \| \le \beta/t$ and

\be
\label{aintass1}
k \notin M \exp( C t^{-1/2 + \epsilon} \beta^{1/2} B_\gk),
\ee
and $g \in D_B$, then we have

\be
\label{aint1}
\int_\ga b(H) \exp( -i t\lambda(H) + i t\nu( A( k g \exp(H) )) ) dH \ll_{A} t^{-A}.
\ee
The implied constant depends on $B^*$, $D_B$, $C$, $\epsilon$, and the size of the derivatives of $b$ of order at most $n$, where $n$ depends on $\epsilon$ and $A$.

\end{prop}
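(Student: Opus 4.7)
The plan is to apply iterated integration by parts to the oscillatory integral in (\ref{aint1}), after showing that the phase $\varphi(H) := -t\lambda(H) + t\nu(A(kg\exp H))$ is nonstationary on $B_\ga$ with gradient $\gg t^{2\epsilon}\beta$, and that all its higher derivatives are controlled by a matching quantity.

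Set $k_0 := k(kg)$ (constant in $H$) and $k_1(H) := k(kg\exp H)$. Lemma \ref{Adiff} yields $\nabla\varphi(H) = t(\text{pr}_\ga \Ad(k_1(H)^{-1})H_\nu - H_\lambda)$, where $\text{pr}_\ga$ is Killing-orthogonal projection, which coincides with the Iwasawa $\ga$-projection and amounts (for $SL_3$) to extracting the matrix diagonal. Since $g \in D_B \subset NA$ and $\exp H \in A$, the $M$-equivariance (\ref{mphi}) of $\Phi_{g\exp H}$ combined with smooth dependence on parameters shows that $d(k_1(H), M) \asymp d(k_0, M)$ uniformly for $H \in B_\ga$, so (\ref{aintass1}) gives $\|X\| \gg t^{-1/2+\epsilon}\beta^{1/2}$ for $X \in \gk$ in the decomposition $k_1(H) = m\exp X$. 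The key algebraic fact is that for $X \in \gk = \mathfrak{so}(3)$ and any $H' \in \ga$, the bracket $[X, H']$ is symmetric with zero diagonal, so has vanishing Iwasawa $\ga$-projection; since $M$ centralizes $\ga$, Taylor expansion therefore gives
\bes
\text{pr}_\ga \Ad(\exp(-X))H_\nu - H_\nu = \tfrac12 \text{pr}_\ga [X,[X,H_\nu]] + O(\|X\|^3),
\ees
with leading term of size $\gg \|X\|^2$ by the regularity of $\nu \in B^*$. Combining with $\|H_\lambda - H_\nu\| \le \beta/t = t^{-2\epsilon}(t^{-1+2\epsilon}\beta)$ gives $|\nabla\varphi| \gg t\|X\|^2 \gg t^{2\epsilon}\beta$.

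For iterated integration by parts to yield a saving of $t^{-A}$, it is essential that higher derivatives of $\varphi$ satisfy the matching bound $|\partial^n\varphi| \ll t\|X\|^2$ for $n \ge 2$. To prove this, define $\eta(H) := \nu(A(k_0\exp H)) - \nu(H)$, so that Iwasawa factorization of $kg\exp H$ into $(n_0 a_0 k_0)\exp H$ gives $\varphi(H)/t = (\nu - \lambda)(H) + \nu(A(kg)) + \eta(H)$. Since the first two terms are linear and constant in $H$ respectively, all higher derivatives of $\varphi/t$ come from $\eta$. When $k_0 \in M$, the identity $k_0\exp H = \exp H \cdot k_0$ yields $A(k_0\exp H) = H$ and hence $\eta \equiv 0$; moreover the first-order variation of $\eta$ in $X$ at $X = 0$ vanishes identically in $H$, because the left-multiplication analogue of Lemma \ref{Adiff} identifies it with $\nu$ applied to the diagonal of $\Ad(\exp(-H))Y$ for $Y \in \gk$, and $\ad(H)$ annihilates diagonals so the diagonal of $\Ad(\exp(-H))Y = Y - [H, Y] + \cdots$ vanishes order by order. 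Smoothness in $(X, H)$ then gives $\partial_H^n \eta = O(\|X\|^2)$ for all $n \ge 0$, uniformly for $H \in B_\ga$.

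With $|\nabla\varphi|$ and $|\partial^n\varphi|$ (for $n \ge 2$) both of order $t\|X\|^2$, iterating the standard nonstationary phase operator $L = -i|\nabla\varphi|^{-2}\sum_j \partial_j\varphi \cdot \partial_j$ yields $\bigl|\int b e^{i\varphi} dH\bigr| \ll_N (t\|X\|^2)^{-N} \le t^{-2N\epsilon}$ for any $N$, and choosing $N \ge A/(2\epsilon)$ establishes (\ref{aint1}). The main obstacle is the Hessian bound $|\text{Hess}\,\varphi| \ll t\|X\|^2$, which is considerably stronger than the naive $|\text{Hess}\,\varphi| \ll t$: the latter would give per-iteration gain only $|\text{Hess}\,\varphi|/|\nabla\varphi|^2 \ll t^{1-4\epsilon}/\beta^2$, which fails to decay when $\beta$ is small, so carefully exploiting the vanishing of both $\eta$ and $\partial_X \eta$ at $X = 0$ is what makes the argument work.
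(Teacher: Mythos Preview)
Your approach is essentially the same as the paper's: integrate by parts after establishing that the phase has gradient $\gg t\,d(k,M)^2$ and higher $H$-derivatives $\ll t\,d(k,M)^2$. Your route to the higher-derivative bound via $\eta$ and its second-order vanishing in the $k_0$-variable is a clean repackaging of the paper's argument, and your explicit use of the $SL_3$ matrix structure (symmetric/antisymmetric, zero diagonal) is more concrete than the paper's appeal to general Hessian results from \cite{DKV,Ma2}.

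There is, however, a genuine gap. Your Taylor expansion
\bes
\text{pr}_\ga \Ad(\exp(-X))H_\nu - H_\nu = \tfrac12\,\text{pr}_\ga[X,[X,H_\nu]] + O(\|X\|^3),
\ees
with leading term $\gg \|X\|^2$, yields $|\nabla\varphi| \gg t\|X\|^2$ only for $X$ in some fixed small ball, after which the remainder may dominate. But the hypothesis (\ref{aintass1}) gives only a \emph{lower} bound on $d(k,M)$, so $k$ may lie far from $M$, where your decomposition $k_1(H)=m\exp X$ with small $X$ is unavailable and the Taylor argument does not apply. The paper handles this by an explicit case split: for $k$ outside a fixed neighbourhood $MU$ of $M$ it invokes Kostant's theorem, which gives $\langle H_\nu,\Ad(k')H_\nu\rangle < \langle H_\nu,H_\nu\rangle$ for every $k'\notin M$, and then compactness produces a uniform lower bound $H_\nu\psi>\delta$. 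The delicate second-order analysis is only needed for $k\in MU$. Without such a global input you cannot rule out $\text{pr}_\ga\Ad(k_1^{-1})H_\nu = H_\nu$ for some $k_1\notin M$, and the gradient lower bound would fail there. (The higher-derivative bound for $k$ far from $M$ is harmless, since then $d(k,M)\asymp 1$ and the required estimate $\partial^n\varphi\ll t$ is trivial.)

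A secondary point: your $X$ depends on $H$ through $k_1(H)$, yet you treat $\|X\|$ as a constant in the integration-by-parts step. This is ultimately fine since $\|X(H)\|\asymp d(k,M)$ uniformly for $H\in B_\ga$, but it should be stated.
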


\begin{proof}

Define

\bes
\psi(H, k, g) = \lambda(H) - \nu( A( k g \exp(H) )), \quad H \in B_\ga, k \in K, g \in D_B,
\ees
to be the phase of the integral (\ref{aint1}).  Throughout the proof, the variables $g$ and $H$ will always be restricted to $D_B$ and $B_\ga$ respectively.  We shall always let $H_\nu \psi$ denote the derivative of $\psi$ with respect to the variable $H$.  Lemma \ref{Adiff} implies that

\begin{align}
\notag
H_\nu \psi(H, k, g) & = \langle H_\nu, H_\lambda \rangle - \langle H_\nu, \Ad( \Phi_{g \exp(H)}(k) ) H_\nu \rangle \\
\label{psidiff}
& = \langle H_\nu, H_\lambda - H_\nu \rangle + \langle H_\nu, (1 - \Ad( \Phi_{g \exp(H)}(k) )) H_\nu \rangle.
\end{align}

It is proven in Proposition 5.4 of \cite{DKV} that the function $\langle H_\nu, \Ad(k) H_\nu \rangle$ has a critical point at the identity, and it follows from Proposition 6.5 of \cite{DKV} or Proposition 4.4 of \cite{Ma2} that the Hessian at this critical point is negative definite, uniformly for $\nu \in B^*$.  It follows that, given $B^*$, we may choose an open neighbourhood $0 \in U^1_\gk \subset \gk$ such that for $X \in U^1_\gk$ we have

\be
\label{AHess}
\langle H_\nu, H_\nu \rangle - \langle H_\nu, e^{\ad(X)} H_\nu \rangle \sim_{B^*} \| X \|^2,
\ee
and $\exp$ gives a diffeomorphism $U^1_\gk \rightarrow U_1 := \exp(U^1_\gk)$.  Let $0 \in U_\gk \subset \gk$ be an open set such that $\exp$ gives a diffeomorphism $U_\gk \rightarrow U := \exp(U_\gk)$, and

\be
\label{phicap}
U \subseteq \bigcap_{h \in D_B \exp(B_\ga)} \Phi_h^{-1}(U_1).
\ee

Assume that $k \notin M U$.  Theorem 8.2 of \cite{Ko} implies that if $k_0 \in K$, we have $\langle H_\nu, \Ad(k_0) H_\nu \rangle \le \langle H_\nu, H_\nu \rangle$ with equality iff $k_0 \in M$.  It follows from this and (\ref{mphi}) that there exists $\delta > 0$ depending only on $B^*$ and $D_B$ such that

\be
\label{psinonstat}
\langle H_\nu, (1 - \Ad( \Phi_{g \exp(H)}(k) )) H_\nu \rangle > \delta.
\ee
The condition $\| \lambda - \nu \| \le \beta/t$ implies that

\be
\label{Hnu}
| \langle H_\nu, H_\lambda - H_\nu \rangle | \ll_{B^*} \beta/t \le t^{-1/2},
\ee
and combined with (\ref{psidiff}) and (\ref{psinonstat}) this implies that $H_\nu \psi \ge \delta/2$ for $t$ sufficiently large.  The bound (\ref{aint1}) now follows by integration by parts.

Now assume that $k \in MU$.  In the rest of the proof, all implied constants will depend on $B^*$ and $D_B$.  As $\psi(H, mk, g) = \psi(H, k, g)$ for $m \in M$, we may assume that $k = \exp(X) \in U$ with $X \in U_\gk$.  The assumption (\ref{aintass1}) implies that

\be
\label{Xnorm}
\| X \|^2 \ge C^2 t^{-1+2\epsilon} \beta.
\ee
The condition (\ref{phicap}) on $U$ implies that $\Phi_{g \exp(H)}(k) \in U_1$, and so we may define $Y(H, k, g) \in U^1_\gk$ by requiring that $\Phi_{g \exp(H)}(k) = \exp(Y(H, k, g))$.  Because $\Phi_{g \exp(H)}$ is a smoothly varying family of diffeomorphisms fixing $e$, we have $\| Y(H, k, g) \| \sim \| X \|$, and hence (\ref{AHess}) gives

\begin{align*}
\langle H_\nu, (1 - \Ad( \Phi_{g \exp(H)}(k) )) H_\nu \rangle & = \langle H_\nu, (1 - e^{\textrm{ad}(Y(H,g,k))} ) H_\nu \rangle \\
& \gg \| X \|^2.
\end{align*}
It follows by combining this with (\ref{Hnu}) and (\ref{Xnorm}) that $H_\nu \psi \gg \| X \|^2$.  Combining (\ref{psidiff}) and (\ref{AHess}) gives

\bes
H_\nu \psi(H, k, g) - \langle H_\nu, H_\lambda - H_\nu \rangle \ll \| X \|^2,
\ees
which implies that $H_\nu \psi(H, k, g) - \langle H_\nu, H_\lambda - H_\nu \rangle$ vanishes to second order at $k = e$.  It follows that

\bes
H_\nu^n \psi(H, k, g) \ll_n \| X \|^2
\ees
for $n \ge 2$.  As $t \| X \|^2 \ge C^2 t^{2\epsilon} \beta$, the result now follows by integration by parts with respect to $H_\nu$.

\end{proof}

\begin{prop}
\label{aintprop2}

Fix $C > 0$, $1/10 > \epsilon > 0$, a compact set $D_B \subset NA$, and a function $b \in C^\infty_0(\ga)$ with $\textup{supp}(b) \subseteq B_\ga$.  If $g = na \in D_B$ and $\lambda$, $\nu \in B^*$ satisfy $\| \lambda - \nu \| \le \beta/t$ and

\be
\label{nbd}
n \notin \exp(C t^{-1/2+\epsilon} \beta^{1/2} B_\gn),
\ee
then

\be
\label{aint2}
\int_\ga b(H) e^{-it \lambda(H)} \varphi_{t\nu}(g \exp(H)) dH \ll_A t^{-A}.
\ee
The implied constant depends on $B^*$, $D_B$, $C$, $\epsilon$, and the size of the derivatives of $b$ of order at most $n$, where $n$ depends on $\epsilon$ and $A$.

\end{prop}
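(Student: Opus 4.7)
The plan is to rewrite $\varphi_{t\nu}$ via Harish-Chandra's integral representation and reduce to Proposition \ref{aintprop1}. Inserting
\bes
\varphi_{t\nu}(h) = \int_K e^{(it\nu - \rho)(A(kh))} dk
\ees
into (\ref{aint2}) and swapping orders of integration, the left-hand side becomes $\int_K J(k)\, dk$ with
\bes
J(k) = \int_\ga b(H) e^{-\rho(A(kg\exp(H)))} e^{-it\lambda(H) + it\nu(A(kg\exp(H)))} dH.
\ees
I split the $K$-integral into the \emph{far} region $k \notin M\exp(C't^{-1/2+\epsilon/2}\beta^{1/2}B_\gk)$ and its complement. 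On the far region, Proposition \ref{aintprop1} with parameter $\epsilon/2$ applies (the smooth factor $e^{-\rho(A(\cdot))}$ can be absorbed into the amplitude), giving $J(k) \ll_A t^{-A}$ uniformly and contributing $O(t^{-A})$ to (\ref{aint2}).

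On the near region, I parametrise $k = m\exp(X)$ with $m \in M$ and $\|X\| \le C' t^{-1/2+\epsilon/2}\beta^{1/2}$. Since $A(m\cdot) = A(\cdot)$ and $\Ad(m)$ permutes the positive root spaces of $\gn$, replacing $n$ by $mnm^{-1}$ preserves (\ref{nbd}) and we may take $m = e$. The goal is then to show
\bes
\int_{\|X\| \le C't^{-1/2+\epsilon/2}\beta^{1/2}} J(\exp(X))\, dX \ll_A t^{-A}
\ees
by non-stationary phase in $X$, using hypothesis (\ref{nbd}). The key computation is the first-order Iwasawa variation: differentiating $\exp(sX')n\exp(H) = n(s)a(s)k(s)$ at $s = 0$ gives a direct sum decomposition $X' = \dot{n}(0) + \Ad(n)\dot{A}(0) + \Ad(n\exp(H))\dot{k}(0)$ in $\g = \gn \oplus \ga \oplus \Ad(n\exp(H))\gk$. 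Writing $X' = \sum_\alpha c_\alpha(E_\alpha - E_{-\alpha})$ and $\log n = \sum_\alpha m_\alpha E_\alpha$ in the root basis and solving for the $\ga$-component to leading order in $\log n$, one obtains
\bes
\dot{A}(0) = \sum_\alpha c_\alpha m_\alpha H_\alpha + O(\|\log n\|^2),
\ees
with $H_\alpha \in \ga$ the coroot of $\alpha$. Pairing with $\nu$ and using the regularity of $\nu \in B^*$ to bound $\min_\alpha |\nu(H_\alpha)|$ from below gives $|\partial_X \psi(H,0)| \ge c\, t\|\log n\| \ge c\, t^{1/2+\epsilon}\beta^{1/2}$ via (\ref{nbd}); since $\|\partial_X^2\psi\| = O(t)$, the lower bound persists on the whole $X$-domain once $t$ is large.

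Iterated integration by parts in $X$, with a smooth cutoff whose support lies slightly outside the near region (where Proposition \ref{aintprop1} remains valid), then yields the $O(t^{-A})$ bound. The main obstacle is the bookkeeping for the iterated IBP: at each step an $X$-derivative can fall on the cutoff, on the amplitude $J(\exp(X))$ (itself an $H$-integral with a $t$-dependent phase), or on $1/\partial_X\psi$, and in each case one must verify that the net gain per step is at least a fixed negative power of $t$. The explicit root-basis calculation of $\dot{A}(0)$ is where regularity of $\nu$ and the hypothesis (\ref{nbd}) combine to produce the required lower bound on the $X$-gradient of the phase.
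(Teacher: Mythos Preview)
Your overall strategy matches the paper's: insert the Harish-Chandra integral over $K$, split into a region near $M$ and its complement, dispose of the complement via Proposition \ref{aintprop1}, and on the near region integrate by parts in the $K$-variable using a lower bound on the $K$-gradient of the phase coming from (\ref{nbd}). Your root-basis computation of $\dot A(0)=\sum_\alpha c_\alpha m_\alpha H_\alpha+O(\|\log n\|^2)$ is exactly the infinitesimal content of the paper's key lemma, which shows $X_\alpha^S K_\beta^S\psi_S=\delta_{\alpha\beta}\langle\alpha,\nu\rangle/2$ along $A$ and then deduces $|K_\alpha^S\psi_S(x)|\gg \|N(x)\|$ near $A$.

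Two points need correction. First, the integration by parts is not set up coherently as written: you cannot treat $J(\exp X)$ as the ``amplitude'' while simultaneously dividing by $\partial_X\psi$, because $J$ already contains the oscillatory factor $e^{it\nu(A(\cdot))}$ and hence has $X$-derivatives of size $t$; there is no single phase $\psi(X)$ independent of $H$ to factor out of $J$. The correct organisation (and the paper's) is to swap the order of integration and, for each fixed $H$, bound $\int_K b_1(k)\exp\bigl((\rho+it\nu)(A(kg\exp H))\bigr)\,dk$ by non-stationary phase in $k$; the amplitude is then the harmless cutoff $b_1(k)e^{\rho(\cdot)}$ at scale $t^{-1/2+\epsilon}\beta^{1/2}$, and the phase $t\nu(A(kg\exp H))$ has the $K$-gradient you computed. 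Second, your first-order formula for $\dot A(0)$ only delivers $|\partial_X\psi|\gg t\|\log n\|$ when $\|\log n\|$ is small enough to dominate the $O(\|\log n\|^2)$ error. For $g\in D_B$ with $n$ bounded away from $e$, the paper invokes a separate compactness argument (the $K$-gradient of $\psi_S$ vanishes precisely along $A$, so on any compact subset of $S\setminus A$ some $|K_\alpha^S\psi_S|$ is bounded below uniformly); your plan should include this case split.
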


\begin{proof}

If we substitute the formula for $\varphi_{t\nu}$ as an integral of plane waves into (\ref{aint2}), it becomes

\bes
\int_\ga \int_K b(H) \exp(-it \lambda(H) + (\rho + it\nu)(A(kg \exp(H)))) dk dH.
\ees
Choose a function $f \in C^\infty_0(\gk)$ with $\text{supp}(f) \subseteq 2B_\gk$ and $f(X) = 1$ for $X \in B_\gk$.  Let $C_1 > 0$ be a constant to be chosen later, define $b_1 \in C^\infty(K)$ to be the pushforward of $f( C_1^{-1} t^{1/2-\epsilon} \beta^{-1/2} X)$ under $\exp$, and define $b_2$ by $b_2(k) = 1 - \sum_{m \in M} b_1(mk)$.  If $t$ is sufficiently large depending on $C_1$, we have

\begin{align*}
\text{supp}(b_1) & \subseteq \exp(2 C_1 t^{-1/2+\epsilon} \beta^{1/2} B_\gk), \\
\text{supp}(b_2) & \subseteq K \setminus M \exp( C_1 t^{-1/2+\epsilon} \beta^{1/2} B_\gk).
\end{align*}
Proposition \ref{aintprop1} and the assumption $\| \lambda - \nu \| \le \beta/t$ imply that

\bes
\int_\ga \int_K b_2(k) b(H) \exp(-it \lambda(H) + (\rho + it\nu)(A(kg \exp(H)))) dk dH \ll_A t^{-A}.
\ees
It therefore suffices to prove that

\bes
\int_\ga \int_K b_1(mk) b(H) \exp(-it \lambda(H) + (\rho + it\nu)(A(kg \exp(H)))) dk dH \ll_A t^{-A}
\ees
for $m \in M$, and we assume without loss of generality that $m = e$.  We shall do this by proving the bound

\be
\label{kintdecay}
\int_K b_1(k) \exp( it\nu(A(kg))) dk \ll_A t^{-A}
\ee
for the integrals over $K$.  Note that we have absorbed $\exp(H)$ into $g$ after enlarging $D_B$, and absorbed the factor $\exp( \rho( A(kg \exp(H))))$ into $b_1(k)$ and suppressed the dependence on $H$ and $g$.  The implied constants in the above three bounds have the same dependence as (\ref{aint2}).

We define the phase functions

\bes
\psi(k,g) = \nu(A(kg)), \quad k \in K, \; g \in NA,
\ees
and

\bes
\quad \psi_S(x) = \nu(A(x)), \quad x \in S.
\ees
If $X \in \g$, we let $X^S$ be the vector field on $S$ whose value at $x \in S$ is $\frac{\partial}{\partial t} \exp(t X) x |_{t=0}$.  It may be shown that these vector fields satisfy $[X^S, Y^S] = -[X,Y]^S$, where the first Lie bracket is on $S$ and the second is in $\g$.  We choose $X_\alpha \in \g_\alpha$ for each $\alpha \in \Delta$ so that the relations $X_{-\alpha} = \theta X_\alpha$ and $\langle X_\alpha, X_{-\alpha} \rangle = -1/2$ are satisfied, where $\theta$ is the Cartan involution on $\g$.  If $\alpha \in \Delta^+$, we let $K_\alpha = X_\alpha + X_{-\alpha} \in \gk$.  It may be seen that the fields $\{ X_\alpha^S | \alpha \in \Delta^+ \}$ form a basis for the normal bundle to $A$ in $S$.

Proposition 5.4 of \cite{DKV} implies that the set of $x \in S$ where the functions $\{ K_\alpha^S \psi_S | \alpha \in \Delta^+ \}$ vanish simultaneously is exactly $A$.  The following lemma shows that these functions in fact form a co-ordinate system transversely to $A$.

\begin{lemma}
\label{XKpsi}

If $\alpha$, $\beta \in \Delta^+$ and $x \in A$, we have $X_\alpha^S K_\beta^S \psi_S(x) = \delta_{\alpha \beta} \langle \alpha, \nu \rangle \psi_S(x) /2$.

\end{lemma}

\begin{proof}

We have

\bes
X_\alpha^S K_\beta^S \psi_S = K_\beta^S X_\alpha^S \psi_S + [X_\alpha^S, K_\beta^S] \psi_S.
\ees
The first term vanishes, as $X_\alpha^S \psi_S \equiv 0$.  If $\alpha = \beta$ then

\bes
[X_\alpha^S, K_\alpha^S] = - [X_\alpha, K_\alpha]^S = \tfrac{1}{2} H_\alpha^S,
\ees
and the lemma follows.  If $\alpha \neq \beta$ then $[X_\alpha, K_\beta ] \in \ga^\perp$, so along $A$ we have

\bes
[X_\alpha, K_\beta]^S \in \text{span} \langle X_\gamma^S | \gamma \in \Delta^+ \rangle.
\ees
As the fields $X_\gamma^S$ annihilate $\psi_S$, the lemma follows.

\end{proof}

Fix a compact set $D_A \subset A$ such that $D_B \subset N D_A$, and a second compact set $D_A' \subset A$ that contains an open neighbourhood of $D_A$.  Let $S_\gn$ be the unit sphere in $\gn$ with respect to $\| \cdot \|$.  We have the following corollary of Lemma \ref{XKpsi}.

\begin{cor}

There exist open sets $U_\alpha' \subset S_\gn$ for each $\alpha \in \Delta^+$, and $\sigma$, $\delta > 0$, such that $S_\gn = \bigcup U_\alpha'$, and if $\alpha \in \Delta^+$ and $x \in \exp( [0, \sigma) U_\alpha') D_A' \subset S$ then

\be
\label{Kpsilower}
|K_\alpha^S \psi_S(x)| \ge \delta \| N(x) \|,
\ee
where $N(x)$ is as in (\ref{Iwasawa}).  The data $U'_\alpha$, $\delta$, and $\sigma$ depend on $B^*$ and $D_A'$.

\end{cor}

Suppose that $g \in \exp( \sigma B_\gn /2) D_A$.  For each $\alpha \in \Delta^+$, we choose a second open set $U_\alpha \subset S_\gn$ such that $\overline{U}_\alpha \subset U_\alpha'$ and we still have $S_\gn = \bigcup U_\alpha$.  Let $\alpha \in \Delta^+$ be such that $g \in \exp( [0, \sigma/2)  U_\alpha) D_A$.  The assumption (\ref{nbd}) is equivalent to the bound $\| N(g) \| \ge C t^{-1/2+\epsilon} \beta^{1/2}$, and we may assume that $k \in \text{supp}(b_1) \subseteq \exp(2 C_1 t^{-1/2+\epsilon} \beta^{1/2} B_\gk)$.  These imply that if $C_1$ is sufficiently small depending on $C$ and $D_B$, we have $kg \in \exp( [0, \sigma)  U_\alpha') D_A'$, and there is $C_2 = C_2(C, C_1, D_B) > 0$ such that $\| N(kg) \| \ge C_2 t^{-1/2+\epsilon} \beta^{1/2}$.  It follows from (\ref{Kpsilower}) that

\bes
|K_\alpha \psi(k,g)| = |K_\alpha^S \psi_S(kg) | \ge \delta \| N(kg) \| \ge \delta C_2 t^{-1/2+\epsilon} \beta^{1/2}
\ees
for $k \in \text{supp}(b_1)$.  The bound (\ref{kintdecay}) now follows from $K_\alpha^n \psi(k,g) \ll C_3(B^*, D_B, n)$, and an application of Lemma \ref{intparts} below with $\delta = t^{-1/2 + \epsilon} \beta^{1/2}$.

Suppose that $g \in D_B \setminus \exp( \sigma B_\gn /2) D_A$.  The fact that the simultaneous vanishing set of the functions $K_\alpha^S \psi_S$ is exactly $A$ implies that there exists an $\alpha \in \Delta^+$, and $C_4(B^*, D_B, \sigma) > 0$, such that $| K_\alpha \psi(k,g) | \ge C_4$ for $k \in \text{supp}(b_1)$ and $C_1$ sufficiently small depending on $B^*$, $D_B$, and $\sigma$.  The proposition again follows by integration by parts.

\end{proof}

\begin{lemma}
\label{intparts}

If $b \in C^\infty_0(\R)$ is a cutoff function at scale $1 \ge \delta > 0$, and $\phi \in C^\infty(\R)$ satisfies

\bes
\phi'(x) \gg \delta, \quad \phi^{(n)}(x) \ll_n 1
\ees
for $x \in \textup{supp}(b)$ and $n > 1$, then

\bes
\int b(x) e^{it\phi(x)} dx \ll_A \delta (t \delta^2)^{-A}.
\ees

\end{lemma}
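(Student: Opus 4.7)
The plan is a standard non-stationary phase argument: integrate by parts $A$ times using the first-order operator
\[
L^* f = -\frac{1}{it}\frac{d}{dx}\!\left( \frac{f}{\phi'} \right),
\]
which is the formal adjoint of $L = (it\phi')^{-1} d/dx$ and satisfies $L(e^{it\phi}) = e^{it\phi}$. Writing $\int b(x) e^{it\phi(x)}\,dx = \int (L^*)^A b(x)\, e^{it\phi(x)}\,dx$, it will be enough to show $\|(L^*)^A b\|_\infty \ll_A t^{-A}\delta^{-2A}$ on $\textup{supp}(b)$; since that support has length $\ll \delta$, the estimate $\delta(t\delta^2)^{-A}$ drops out immediately.

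The core step is an induction on $A$, with the stronger inductive hypothesis
\[
\left|\frac{d^n}{dx^n} (L^*)^A b(x)\right| \ll_{A,n} t^{-A}\, \delta^{-2A-n} \qquad \text{on } \textup{supp}(b),
\]
for every $n\ge 0$. The base case $A=0$ is exactly the definition of a cutoff at scale $\delta$, namely $|b^{(n)}|\ll_n \delta^{-n}$. For the inductive step, expanding
\[
L^* g = -\frac{1}{it}\!\left(\frac{g'}{\phi'} - \frac{g\,\phi''}{(\phi')^2}\right)
\]
with $g=(L^*)^A b$, and applying the Leibniz rule together with $|\phi'|\gg \delta$ and $|\phi^{(k)}|\ll_k 1$ for $k\ge 2$, produces exactly the target bound for $(L^*)^{A+1}b$ and all of its derivatives.

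The one piece of book-keeping to watch is the derivatives of $1/\phi'$: a derivative of order $m$ of $1/\phi'$ can be expanded (Fa\`a di Bruno, or direct induction) as a sum of monomials $\phi^{(j_1)}\cdots \phi^{(j_r)}/(\phi')^{r+1}$ with $j_1+\cdots+j_r=m$ and all $j_i\ge 1$, so each such term is $\ll \delta^{-(r+1)} \le \delta^{-(m+1)}$. Combined with the prefactor $1/(it)$ from each application of $L^*$, this shows that each application of $L^*$ costs a factor of at most $(t\delta^2)^{-1}$ (with each additional differentiation costing a further $\delta^{-1}$), which is the only real content of the induction. Once the induction has been run out to level $A$, the lemma follows by the reduction in the first paragraph; there is no genuine obstacle here beyond this Leibniz book-keeping.
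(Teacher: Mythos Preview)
Your argument is correct: repeated integration by parts with the operator $L^*$ and careful $\delta$-bookkeeping does the job. The paper's proof rests on the same idea but organizes it more economically by first rescaling $x \mapsto \delta x$, which rewrites the integral as
\[
\delta \int b(\delta x)\, e^{i(t\delta^2)\,\delta^{-2}\phi(\delta x)}\,dx;
\]
after this substitution $b(\delta x)$ is a cutoff at scale $1$, and the new phase $\delta^{-2}\phi(\delta x)$ has first derivative $\gg 1$ and higher derivatives $\ll_n \delta^{n-2} \le 1$, so one is reduced to the completely standard scale-$1$ non-stationary phase estimate with large parameter $t\delta^2$, and the Leibniz/Fa\`a di Bruno bookkeeping disappears. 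Your direct approach has the virtue of making explicit exactly where each power of $\delta$ comes from. One small correction: in the Fa\`a di Bruno expansion of $(1/\phi')^{(m)}$ the monomials are in derivatives of $\phi'$, so the $\phi$-derivatives that appear all have order $\ge 2$ (and sum to $m+r$, not $m$); this is what lets you bound each numerator factor by $O(1)$ using only the hypothesis $\phi^{(n)}\ll 1$ for $n>1$, since no upper bound on $\phi'$ is assumed. The final bound $\delta^{-(m+1)}$ is unaffected.
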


\begin{proof}

We have

\bes
\int b(x) e^{it\phi(x)} dx = \delta \int b(\delta x) e^{i(t \delta^2) \delta^{-2} \phi(\delta x)} dx.
\ees
The function $b(\delta x)$ is now a cutoff function at scale 1, and $\delta^{-2} \phi(\delta x)$ satisfies

\bes
\frac{d}{dx} \delta^{-2}\phi(\delta x) \gg 1, \quad \left( \frac{d}{dx} \right)^n \delta^{-2}\phi(\delta x) \ll_n \delta^{n-2} \le 1
\ees
for $x \in \textup{supp}(b(\delta \, \cdot))$ and $n > 1$.  The lemma now follows by integration by parts.

\end{proof}

We now combine Propositions \ref{aintprop1} and \ref{aintprop2} to prove the following, which will imply Proposition \ref{Ibound} after inverting the various transforms.

\begin{prop}
\label{aaintprop}

Fix functions $b_1$, $b_2 \in C^\infty_0(\ga)$ with $\textup{supp}(b_i) \subseteq B_\ga$.  If $g  \in SL_3(\R)$ and $\nu$, $\lambda_1$, $\lambda_2 \in B^*$ satisfy 

\bes
d(g,e) \le 1 \quad and \quad \| \lambda_i - \nu \| \le \beta/t,
\ees
then we have

\be
\label{aaint1}
\iint_\ga b_1(H_1) b_2(H_2) e^{it(\lambda_1(H_1) - \lambda_2(H_2))} \varphi_{t\nu}( \exp(-H_1) g \exp(H_2)) dH_1 dH_2 \ll t^{-3/2 + \epsilon} \beta^{3/2}
\ee
for any $\epsilon > 0$.  If we also have $d(g,MA) \ge t^{-1/2+\epsilon} \beta^{1/2}$ for some $\epsilon > 0$, then

\be
\label{aaint2}
\iint_\ga b_1(H_1) b_2(H_2) e^{it(\lambda_1(H_1) - \lambda_2(H_2))} \varphi_{t\nu}( \exp(-H_1) g \exp(H_2)) dH_1 dH_2 \ll_A t^{-A}.
\ee
The implied constants depend only on $B^*$, $\epsilon$, and the size of the derivatives of $b_i$ of order at most $n$, where $n$ depends on $\epsilon$ and $A$.

\end{prop}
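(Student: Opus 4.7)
The strategy is to substitute the plane-wave formula $\varphi_{t\nu}(x) = \int_K e^{(\rho+it\nu)(A(kx))} dk$ and recast the LHS of (\ref{aaint1}) and (\ref{aaint2}) as a triple integral over $K \times \ga \times \ga$. The Iwasawa cocycle identity $A(k\exp(-H_1) g \exp(H_2)) = A(k\exp(-H_1) g) + A(k^*\exp(H_2))$, where $k^* = k(k\exp(-H_1) g)$, separates the $H_2$-dependence of the phase. The inner $H_2$-integral (up to a bounded smooth amplitude absorbing the $\rho$-shift and an $H_1$-dependent phase $A(k\exp(-H_1) g)$) then becomes an instance of the oscillatory integral in Proposition \ref{aintprop1} with trivial base point $e \in NA$ and spherical variable $k^*$.

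To prove (\ref{aaint1}), Proposition \ref{aintprop1} yields $O(t^{-A})$ decay on the inner integral whenever $k^* \notin M\exp(Ct^{-1/2+\epsilon}\beta^{1/2} B_\gk)$. For each fixed $H_1 \in B_\ga$, the map $k \mapsto k^* = \Phi_{\exp(-H_1) g}(k)$ is a diffeomorphism of $K$ with Jacobian bounded uniformly in $H_1$ and $g$ (by the second lemma of Section \ref{sec6}), so the Haar measure of the exceptional set $\{k : k^* \in M\exp(Ct^{-1/2+\epsilon}\beta^{1/2} B_\gk)\}$ is $\ll (t^{-1/2+\epsilon}\beta^{1/2})^{\dim \gk} = t^{-3/2+3\epsilon}\beta^{3/2}$. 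Combining the trivial $O(1)$ bound on this exceptional set with the rapid decay outside, and integrating over $H_1 \in B_\ga$, yields (\ref{aaint1}) after renaming $\epsilon$.

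To prove (\ref{aaint2}), the hypothesis $d(g, MA) \geq t^{-1/2+\epsilon}\beta^{1/2}$ must deliver $O(t^{-A})$ decay even on the exceptional set. Here I would revert to the unexpanded form of the inner $H_2$-integral, $\int b_2(H_2) e^{-it\lambda_2(H_2)}\varphi_{t\nu}(\exp(-H_1) g \exp(H_2)) dH_2$, and apply Proposition \ref{aintprop2}. Writing $g = n_g a_g k_g$ via Iwasawa, the distance hypothesis translates via $d(g,MA) \sim d(n_g, e) + d(k_g, M)$ into a lower bound on at least one of $d(n_g, e)$ or $d(k_g, M)$. In the clean case $k_g \in M$ (where $\Ad(k_g)$ acts trivially on $\ga$), the $K$-biinvariance of $\varphi_{t\nu}$ reduces the inner integral to Proposition \ref{aintprop2}'s form with base $\exp(-H_1) n_g a_g \in NA$ whose $N$-component has norm $\gg t^{-1/2+\epsilon}\beta^{1/2}$, giving the required $O(t^{-A})$; the general case is handled by a perturbative treatment of $\Ad(k_g)|_\ga - \textup{id}$ combined with a separate oscillatory argument on the $H_1$-integration using Proposition \ref{aintprop1}.

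The main obstacle is the presence of a nontrivial Iwasawa $K$-factor $k_g$ in the general element $g \in G$, since Proposition \ref{aintprop2} requires its base element to lie strictly in $NA$. Absorption of $k_g$ via $K$-biinvariance of $\varphi_{t\nu}$ is immediate when $k_g \in M$ because $M$ centralizes $\ga$; the perturbative analysis for general $k_g$, coordinated with the case-split coming from $d(n_g, e) + d(k_g, M) \gg t^{-1/2+\epsilon}\beta^{1/2}$, is the technical core of the argument.
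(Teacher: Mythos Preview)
Your argument for (\ref{aaint1}) is correct and matches the paper's: both expand $\varphi_{t\nu}$ as a $K$-integral of plane waves, invoke Proposition~\ref{aintprop1} to localise the $K$-variable to an $M$-neighbourhood of radius $t^{-1/2+\epsilon}\beta^{1/2}$, and pay the volume of that neighbourhood. The paper applies Proposition~\ref{aintprop1} to the $H_1$-integral with the condition placed directly on $k$, while you apply it to the $H_2$-integral with the condition on $k^* = \Phi_{\exp(-H_1)g}(k)$; the diffeomorphism property of $\Phi$ makes these interchangeable.

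For (\ref{aaint2}) there is a genuine gap, and it stems from your choice of decomposition. You write $g = n_g a_g k_g$ in $NAK$ order; the paper instead writes $g = k_1 n_1 a_1$ (and more generally $\exp(H)g = k_1(H)n_1(H)a_1(H)$) in $KNA$ order. With the paper's ordering one has
\[
k\exp(-H_1)g\exp(H_2) = \bigl(k\,k_1(-H_1)\bigr)\cdot\bigl(n_1(-H_1)a_1(-H_1)\bigr)\cdot\exp(H_2),
\]
which is \emph{exactly} the shape $(\text{element of }K)\cdot(\text{element of }NA)\cdot\exp(H_2)$ demanded by Propositions~\ref{aintprop1} and~\ref{aintprop2}. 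The case split is then clean: if $k_1$ lies outside $M\exp(C_2 t^{-1/2+\epsilon}\beta^{1/2}B_\gk)$, then for every $k\in K$ at least one of $k$ or $k\,k_1(-H_1)$ lies outside a comparable neighbourhood of $M$, and Proposition~\ref{aintprop1} applied to the $H_1$- or $H_2$-integral respectively gives $O(t^{-A})$; if instead $k_1$ is this close to $M$, the hypothesis $d(g,MA)\ge t^{-1/2+\epsilon}\beta^{1/2}$ forces $n_1(-H_1)$ to be correspondingly large, and Proposition~\ref{aintprop2} applied to the $H_2$-integral (with base $n_1(-H_1)a_1(-H_1)\in NA$) gives $O(t^{-A})$.

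Your $NAK$ ordering produces $\varphi_{t\nu}(\exp(-H_1)n_g a_g\exp(\Ad(k_g)H_2))$ after using right $K$-invariance, and $\Ad(k_g)H_2\notin\ga$ when $k_g\notin M'$. You correctly flag this as the main obstacle, but the ``perturbative treatment of $\Ad(k_g)|_\ga - \mathrm{id}$'' you propose is not an argument: when $d(k_g,M)\gg t^{-1/2+\epsilon}\beta^{1/2}$ the perturbation is of the same order as the quantity you need to detect, so it cannot be absorbed into error terms. The fix is not perturbation but the $KNA$ reordering above, which places the troublesome $K$-factor where Propositions~\ref{aintprop1}--\ref{aintprop2} already know how to handle it.
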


\begin{proof}

If we unfold the integral over $K$ defining $\varphi_{t\nu}$ in the LHS of (\ref{aaint1}), we obtain the integral

\begin{multline}
\label{aakint}
\iint_\ga \int_K b_1(H_1) b_2(H_2) e^{it(\lambda_1(H_1) - \lambda_2(H_2))} \exp( (\rho + it\nu)( A( k \exp(-H_1) g \exp(H_2) ))  dk dH_1 dH_2.
\end{multline}
Choose constants $C_1, \epsilon > 0$, and assume that the variable $k$ in (\ref{aakint}) satisfies $k \notin M \exp( C_1 t^{-1/2+\epsilon} \beta^{1/2} B_\gk)$.  After writing $g \exp(H_2) = n'a'k'$, we see Proposition \ref{aintprop1} implies that the partial integral of (\ref{aakint}) with respect to $H_1$ is $\ll_A t^{-A}$ for this fixed value of $k$ and any $H_2 \in B_\ga$.  This implies that we may restrict the integral over $K$ in (\ref{aakint}) to $M \exp( C_1 t^{-1/2+\epsilon} \beta^{1/2} B_\gk)$, and this gives the bound (\ref{aaint1}).

We now prove (\ref{aaint2}).  Let $\epsilon > 0$ be given, and assume that $d(g, MA) \ge t^{-1/2+\epsilon} \beta^{1/2}$.  We may assume without loss of generality that $\epsilon < 1/10$.  Let $g = k_1 n_1 a_1$, and let

\bes
\exp(H) g = k_1(H) n_1(H) a_1(H)
\ees
for $H \in \ga$.  Choose a constant $C_2 > 0$, and assume that $k_1 \notin M \exp( C_2 t^{-1/2+\epsilon} \beta^{1/2} B_\gk)$.  Then there is an absolute constant $\kappa > 0$ such that we have

\be
\label{k1notin}
k_1(H) \notin M \exp( \kappa C_2 t^{-1/2+\epsilon} \beta^{1/2} B_\gk)
\ee
for $H \in B_\ga$.  Let $k \in K$, and consider the integral

\begin{multline}
\label{aaint}
\iint_\ga b_1(H_1) b_2(H_2) e^{it(\lambda_1(H_1) - \lambda_2(H_2))} \exp( (\rho + it\nu)( A( k \exp(-H_1) g \exp(H_2) )) dH_1 dH_2.
\end{multline}
The condition (\ref{k1notin}) implies that if $C_1$ is sufficiently small depending on $C_2$, one of

\begin{align}
\label{kout1}
k & \notin M \exp( C_1 t^{-1/2+\epsilon} \beta^{1/2} B_\gk) \\
\label{kout2}
k k_1(-H_1) & \notin M \exp( C_1 t^{-1/2+\epsilon} \beta^{1/2} B_\gk) \quad \textrm{for all } H_1 \in B_\ga
\end{align}
must hold.  If (\ref{kout1}) holds, Proposition \ref{aintprop1} applied to the integral of (\ref{aaint}) over $H_1$ implies that $(\ref{aaint}) \ll_A t^{-A}$.  If (\ref{kout2}) holds, Proposition \ref{aintprop1} applied to the integral over $H_2$ also implies $(\ref{aaint}) \ll_A t^{-A}$.  Integrating these bounds over $K$ gives (\ref{aaint2}).

We may therefore assume that $k_1 \in M \exp( C_2 t^{-1/2+\epsilon} \beta^{1/2} B_\gk)$ for any $C_2 > 0$, and hence that

\be
\label{gangle}
k_1(H) \in M \exp( C_2 t^{-1/2+\epsilon} \beta^{1/2} B_\gk)
\ee
for any $C_2 > 0$ and $H \in B_\ga$.  Our assumption that $d(g, MA) \ge t^{-1/2+\epsilon} \beta^{1/2}$ implies that there is an absolute $C_3 > 0$ such that

\be
\label{gsep}
d( \exp(H)g, MA) \ge C_3 t^{-1/2+\epsilon} \beta^{1/2}
\ee
for $H \in B_\ga$.  If $C_2$ is sufficiently small, (\ref{gangle}) and (\ref{gsep}) imply that there is an absolute $C_4 > 0$ such that $d(n_1(H),e) \ge C_4 t^{-1/2+\epsilon} \beta^{1/2}$ for $H \in B_\ga$.  The result now follows by applying Proposition \ref{aintprop2} to the integral of the LHS of (\ref{aaint2}) over $H_2$ for each fixed $H_1$.

\end{proof}

\begin{cor}

Fix functions $b_1$, $b_2 \in C^\infty_0(\ga)$ with $\textup{supp}(b_i) \subseteq B_\ga$.  Let $\nu \in B^*$, and choose $\phi \in H(t\nu, \beta)$ with $\| \phi \|_2 = 1$.  If $g \in SL_3(\R)$ satisfies $d(g, e) \le 1$, we have

\bes
\iint_\ga b\phi(H_1) \overline{b\phi(H_2)} \varphi_{t\nu}( \exp(-H_1) g \exp(H_2)) dH_1 dH_2 \ll t^{-3/2 + \epsilon} \beta^{9/2}
\ees
for any $\epsilon > 0$.  If in addition we have $d(g, MA) \ge t^{-1/2+\epsilon} \beta^{1/2}$ for some $\epsilon > 0$, then

\bes
\iint_\ga b\phi(H_1) \overline{b\phi(H_2)} \varphi_{t\nu}( \exp(-H_1) g \exp(H_2)) dH_1 dH_2 \ll_A t^{-A}.
\ees
The implied constants depend only on $B^*$, $\epsilon$, and the size of the derivatives of $b_i$ of order at most $n$, where $n$ depends on $\epsilon$ and $A$.

\end{cor}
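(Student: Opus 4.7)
The plan is to reduce the corollary to Proposition \ref{aaintprop} by Fourier expanding $\phi$ against its frequency support. Since $\phi \in H(t\nu, \beta)$, Fourier inversion gives
\begin{equation*}
\phi(H) = (2\pi)^{-3} \int_{\|\mu - t\nu\| \le \beta} \widehat{\phi}(\mu) e^{i\mu(H)} d\mu.
\end{equation*}
Substituting this representation for each copy of $\phi$ in the double integral and rescaling $\mu_i = t\lambda_i$, I would rewrite the left-hand side as
\begin{equation*}
(2\pi)^{-6} \iint \widehat{\phi}(\mu_1) \overline{\widehat{\phi}(\mu_2)}\, J(\lambda_1, \lambda_2)\, d\mu_1 d\mu_2,
\end{equation*}
where
\begin{equation*}
J(\lambda_1, \lambda_2) = \iint_\ga b_1(H_1) b_2(H_2)\, e^{it(\lambda_1(H_1) - \lambda_2(H_2))}\, \varphi_{t\nu}(\exp(-H_1) g \exp(H_2))\, dH_1 dH_2.
\end{equation*}

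Next, for every $\mu_i$ in the support of $\widehat{\phi}$ one has $\|\lambda_i - \nu\| \le \beta/t$, and since $\beta \le t^{1/2}$ the points $\lambda_i$ lie in the slight enlargement of $B^*$ allowed by Proposition \ref{aaintprop}. Applying that proposition pointwise in $(\mu_1, \mu_2)$ therefore yields $J(\lambda_1, \lambda_2) \ll t^{-3/2+\epsilon} \beta^{3/2}$ in general, and the strengthened bound $J(\lambda_1, \lambda_2) \ll_A t^{-A}$ whenever $d(g, MA) \ge t^{-1/2+\epsilon} \beta^{1/2}$.

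Finally, I would control the outer integral of $|\widehat{\phi}(\mu_1)\widehat{\phi}(\mu_2)|$ by Cauchy--Schwarz combined with Plancherel: the $L^1$ mass of $\widehat{\phi}$ on the ball of radius $\beta$ is at most $\Vol(B(t\nu,\beta))^{1/2}\, \|\widehat{\phi}\|_2 \ll \beta^{3/2}$, so the product integral is $O(\beta^{3})$. Multiplying by the pointwise bound for $J$ produces $t^{-3/2+\epsilon}\beta^{9/2}$ in the unconditional case and $\beta^{3} t^{-A}$ in the off-diagonal case, the latter being absorbed into $t^{-A'}$ for any $A'$ using $\beta \le t^{1/2}$. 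I do not anticipate a genuine obstacle: the entire oscillatory analysis is already encoded in Proposition \ref{aaintprop}, and the only verification needed is that $\beta/t$ is small enough to keep the $\lambda_i$ inside the enlarged $B^*$, which is immediate.
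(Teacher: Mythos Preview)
Your proposal is correct and follows exactly the paper's argument: Fourier-invert $\phi$, apply Proposition \ref{aaintprop} pointwise in the frequency variables, and then control $\|\widehat{\phi}\|_1$ by Cauchy--Schwarz over the ball of radius $\beta$. The paper's proof is the one-line observation $\|\widehat{\phi}\|_1 \le \|\widehat{\phi}\|_2\, \Vol(B(t\nu,\beta))^{1/2} \ll \beta^{3/2}$, which is precisely your last step.
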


\begin{proof}

The follows immediately from Proposition \ref{aaintprop} after inverting the Fourier transform and noting that $\| \widehat{\phi} \|_1 \le \| \widehat{\phi} \|_2 (4\pi \beta^3/3)^{1/2} \ll \beta^{3/2}$.

\end{proof}

Proposition \ref{Ibound} now follows by inversion of the Harish-Chandra transform as in Section 6.3 of \cite{Ma1} or Lemma 2.6 of \cite{Ma2}.

\end{document}